\newtheorem{alg}{Algorithm}
\newcommand\BibTeX{{\rmfamily B\kern-.05em \textsc{i\kern-.025em b}\kern-.08em
T\kern-.1667em\lower.7ex\hbox{E}\kern-.125emX}}
\begin{document}

\title{ Greedy Motzkin-Kaczmarz methods for solving linear systems\protect}

\author{Yanjun Zhang}

\author{Hanyu Li*}

\authormark{ZHANG AND LI}

\address{\orgdiv{College of Mathematics and Statistics}, \orgname{Chongqing University}, \orgaddress{\state{Chongqing}, \country{China}}}

\corres{*Hanyu Li, College of Mathematics and Statistics, Chongqing University, Chongqing 401331, P.R. China.\\ \email{lihy.hy@gmail.com or hyli@cqu.edu.cn.}}

\presentaddress{National Natural Science Foundation of China, Grant/Award Number: 11671060; Natural Science Foundation Project of CQ CSTC, Grant/Award Number: cstc2019jcyj-msxmX0267}

\abstract[Summary]{The famous greedy randomized Kaczmarz (GRK) method uses the greedy selection rule on maximum distance to determine a subset of the indices of working rows. In this paper, with the greedy selection rule on maximum residual, we propose the greedy randomized Motzkin-Kaczmarz (GRMK) method for linear systems. The block version of the new method is also presented. We analyze the convergence of the two methods and provide the corresponding convergence factors. Extensive numerical experiments show that the GRMK method has almost the same performance as the GRK method for dense matrices and the former performs better in computing time for some sparse matrices, and the block versions of the GRMK and GRK methods always have almost the same performance.}

\keywords{greedy randomized Kaczmarz method, greedy randomized Motzkin-Kaczmarz method, greedy selection rule, maximum distance rule, maximum residual rule, block algorithms}


\maketitle


\section{Introduction}\label{sec1}
We consider the following consistent linear systems
\begin{equation}
\label{1}
Ax=b,
\end{equation}
where $A\in R^{m\times n}$, $b\in R^{m}$, and $x$ is the $n$-dimensional unknown vector. As we know, the Kaczmarz method \cite{kaczmarz1} is a popular so-called row-action method for solving the systems (\ref{1}). Its update formula is 
\begin{align}
\label{Kacz}
  x_{k+1}=x_{k}+\frac{ b^{(i)}-A^{(i)}x_k}{ \| A^{\left(i\right)} \|_{2}^{2}}( A^{(i)})^{T},
\end{align}
where $A^{(i)}$ denotes the $i$-th row of $A$, $b^{(i)}$  denotes the $i$-th entry of $b$, and $A^T$  denotes the transpose of $A$.
In 2009, 
Strohmer and Vershynin \cite{Strohmer2009} show that the Kaczmarz method converges with expected exponential rate if the row of $A$ in iteration is chosen randomly with probability proportional to the square of the Euclidean norm of the row. 
Subsequently, many randomized Kaczmarz type methods were proposed for different possible systems settings; see for example \cite{Needell2010, Completion2013, Completion2015, Dukui2019} and references therein.  
These randomized methods have two obvious disadvantages. The first one is that the probability criterion will be equivalent to the uniform sampling if the Euclidean norms of all the rows of the matrix $A$ are the same. The case can happen by 
scaling the matrix $A$ with a suitable diagonal matrix. 
The second one is that it is possible to sample the same row twice in iteration. In this case, 
no progress is made in such an update. 
To tackle these 
problems, in 2018, Bai and Wu \cite{Bai2018} constructed a greedy randomized Kaczmarz (GRK) method by introducing a more efficient probability criterion for selecting the working rows from the matrix $A$. The GRK method outperforms the ordinary randomized Kaczmarz methods in terms of the number of iterations and computing time, and 
the scheme in this method is very powerful in achieving efficient methods for solving linear problems \cite{Bai2018r,Zhang2019,huang2020remarks}, least squares problem \cite{Bai2019,Zhang2020} and ridge regression problem\cite{Liu2019}.

The greedy selection rule used in the GRK method is from the well known maximum distance rule because 
the index subset in the method is built on the combination of the maximum and average distances. 
As we know, there are two main famous greedy selection rules: the maximum distance rule and the maximum residual rule.
Specifically, let $x_{\star}=A^{\dag}b$ be the least-Euclidean-norm solution of the systems (\ref{1}). Then a sequence of vectors $x_{0}$, $x_{1}$, \ldots produced by the iteration (\ref{Kacz}) is said to converge in square to the solution $x_{\star}$ if and only if $\|x_{k}-x_\star\|^{2}_2\rightarrow 0$ as $k\rightarrow\infty$. Since the projections in iteration are orthogonal, we can check that (see also the proof of Theorem \ref{theorem1} below)
\begin{equation*}
\label{7}
\|x_{k+1}-x_{\star}\|^2_2=\|x_{k}-x_{\star}\|^2_2-\|x_{k+1}-x_{k}\|^2_2.
\end{equation*}
Hence, the optimal projection is the one that maximizes the distances $\|x_{k+1}-x_{k}\|^2_2$. 
Note that the update formula (\ref{Kacz}) implies
$$\|x_{k+1}-x_{k}\|^2_2=\left\|\frac{ b^{(i)}-A^{(i)}x_{k}}{ \| A^{ (i )} \|_{2}^{2}}( A^{(i)})^{T}\right\|^2_2,$$
which shows that  in iteration we should select the $ t_k$-th index according to
\begin{equation*}
t_k={\rm arg} \max \limits _{i }\left\|\frac{ b^{(i)}-A^{(i)}x_{k}}{ \| A^{ (i )} \|_{2}^{2}}( A^{(i)})^{T}\right\|^2_2={\rm arg} \max \limits _{i }\frac{ ( b^{(i)}- A^{(i)} x_{k})^2}{\| A^{(i)}\|^2_2}.
\end{equation*}
This greedy selection rule is the maximum distance rule \cite{Eldar2011,Gao2019,nutini2016convergence}.
The maximum residual rule \cite{griebel2012greedy,nutini2016convergence} selects the $ t_k$-th index according to 
$$t_k={\rm arg} \max \limits _{i } ( b^{(i)}- A^{(i)} x_{k})^2.$$
That is, it grasps the index corresponding to the largest magnitude entry of the residual vector $r_k=b-Ax_k$, and hence 
the largest magnitude entry of the residual vector $r_k$ can be preferentially annihilated as far as possible and make the $t_k$-th equation  be `furthest' from being satisfied. The maximum residual rule is also known as the Motzkin method \cite{Agamon54,Motzkin54}, which can also make sure that 
the same index will not be chosen twice in iteration 
and hence has 
better convergence rate compared with the ordinary randomized Kaczmarz methods. Consequently, 
many analyses and applications about Motzkin type methods were published in recent years; see for example \cite{Petra2016,de2017sampling,haddock2019motzkin,haddock2019greed,rebrova2019sketching,morshed2020accelerated,morshed2020heavy} and references therein.

However, to the best of our knowledge,
there are few results in the literature that explore the use of greedy randomized Motzkin scheme, i.e., the  maximum residual rule, for Kaczmarz type algorithms for solving linear systems. 
To fill the research gap, in this work, paralleling to the GRK method, 
we develop the greedy randomized Kaczmarz method induced from the Motzkin method, i.e., the greedy randomized Motzkin-Kaczmarz (GRMK) method, for solving the systems (\ref{1}). Moreover, to further accelerate the GRMK method, we also present the block version of the new method using the 
index subset 
generated 
in the GRMK method 
and refer to it as the greedy Motzkin block Kaczmarz (GMBK) method.
Recently, many works on block Kaczmarz methods were reported because, compared with the original methods, the block methods allows for significant computational speedup and accelerated convergence to the solution;  see for example \cite{needell2014paved,needell2015randomized,gower2015randomized,briskman2015block}.
The block update formula can be written as
\begin{equation}
\label{rbgs}
x_{k+1}=x_{k}+A^{\dagger}_{\tau}(b_{\tau}-A_{\tau}x_{k}),
\end{equation}
where $\tau \subset \{1, \ldots, m\}$, $A_{\tau }$ and $b_{\tau}$ are the submatrix and subvector of $A$ and $b$, respectively, with rows indexed by $\tau$, and $A^{\dagger}_{\tau}$ is the Moore-Penrose pseudoinverse of $A_{\tau}$. 
To avoid computing the pseudoinverse, a variant of the above block Kaczmarz method is to project the current estimate onto each individual row that forms the submatrix $A_{\tau}$, and average the obtained projections 
 to form the next iterate:
\begin{equation}
\label{rbgss}
x_{k+1}=x_{k}-\sum_{i \in {\tau}} w_{i} \frac{A^{(i)} x_{k}-b^{(i)}}{\left\|A^{(i)}\right\|^{2}_2} (A^{(i)})^{T},
\end{equation}
 where $ w_{i}$ represents
the weight corresponding to the $i$-th row. 
This update is very suitable for 
distributed computing; see \cite{Necoara2019,Du20202,moorman2020randomized} for detailed discussions on this topic.

The rest of this paper is organized as follows. In Section \ref{sec2}, some notation and preliminaries are given. The GRMK method and its block version are discussed in Section \ref{sec3} and Section \ref{sec4}, respectively. Finally, we present the numerical results in Section \ref{sec5}.

\section{Notation and preliminaries}\label{sec2}
Throughout the paper, for a matrix $A$, 
${\rm R(A)}$ denotes its 
 column space, and for a set  $\mathcal{I}$, $|\mathcal{I}|$ denotes the number of elements of the set. 
In addition,  the smallest positive eigenvalues of $A^{T}A$ is denoted by $\lambda_{\min }(A^{T}A)$. 

To analyze the convergence of our new methods, the following fact will be used extensively.
\begin{lemma}
\label{lem2}\cite{Bai2018}
Let $A \in R^{m\times n}$ and for any vector $x \in {\rm R(A^T)}$, it holds that
\begin{align}
\|Ax\|^2_2\geq\lambda_{\min}\left( A^{T} A\right)\|x\|^2_2. \notag
\end{align}
\end{lemma}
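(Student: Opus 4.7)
The plan is to reduce the inequality to a Rayleigh quotient bound on the invariant subspace $R(A^T)$. The key observation is that $R(A^T)$ equals $N(A)^{\perp}$, and $N(A)$ is precisely the eigenspace of $A^T A$ associated with the zero eigenvalue. Hence the restriction $x \in R(A^T)$ is designed to exclude exactly the directions along which $A^T A$ acts trivially, which is what makes the bound involving only the smallest \emph{positive} eigenvalue meaningful.

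Concretely, I would proceed as follows. First, form the eigendecomposition of the symmetric positive semidefinite matrix $A^T A = \sum_{i=1}^{n} \lambda_i v_i v_i^T$, with orthonormal eigenvectors $v_i$ and eigenvalues $\lambda_i \geq 0$ ordered so that $\lambda_1 \geq \cdots \geq \lambda_r > 0 = \lambda_{r+1} = \cdots = \lambda_n$, where $r = \mathrm{rank}(A)$. Next, identify $\mathrm{span}\{v_1,\ldots,v_r\}$ with $R(A^T)$: indeed, $A v_i = 0$ for $i>r$ since $\|A v_i\|_2^2 = v_i^T A^T A v_i = \lambda_i = 0$, so these $v_i$ lie in $N(A)$, while by dimension count the remaining $v_i$ form an orthonormal basis of $R(A^T)$.

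Then, for $x \in R(A^T)$, expand $x = \sum_{i=1}^{r} c_i v_i$ with $\|x\|_2^2 = \sum_{i=1}^r c_i^2$, and compute
\begin{equation*}
\|Ax\|_2^2 = x^T A^T A x = \sum_{i=1}^r \lambda_i c_i^2 \geq \lambda_r \sum_{i=1}^r c_i^2 = \lambda_{\min}(A^T A)\,\|x\|_2^2,
\end{equation*}
where $\lambda_r = \lambda_{\min}(A^T A)$ in the notation of the preliminaries (smallest \emph{positive} eigenvalue).

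There is no real obstacle here; the only point one must not fudge is the identification of $R(A^T)$ with the span of the eigenvectors of $A^T A$ having strictly positive eigenvalues. An alternative, essentially equivalent, route would be to invoke the (thin) SVD $A = U \Sigma V^T$ with $\Sigma \in \mathbb{R}^{r\times r}$ invertible and $V$ an orthonormal basis of $R(A^T)$, write $x = V y$ for some $y \in \mathbb{R}^r$, and observe $\|Ax\|_2^2 = \|\Sigma y\|_2^2 \geq \sigma_{\min}^2 \|y\|_2^2 = \lambda_{\min}(A^T A) \|x\|_2^2$, using $\|x\|_2 = \|y\|_2$ by orthonormality of $V$. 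Either route delivers the lemma in a few lines.
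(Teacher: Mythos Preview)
Your proof is correct; the eigendecomposition (or equivalently thin SVD) argument cleanly identifies $R(A^T)$ with the span of the eigenvectors of $A^T A$ corresponding to positive eigenvalues and yields the Rayleigh quotient bound with the smallest \emph{positive} eigenvalue. Note, however, that the paper does not supply its own proof of this lemma: it is quoted from \cite{Bai2018} and stated without argument, so there is no in-paper proof to compare against.
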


For comparison later in this paper, we list the GRK method proposed in \cite{Bai2018} in Algorithm \ref{alg1}. 

\begin{alg}
\label{alg1}
The GRK method for the systems (\ref{1}).
\begin{enumerate}[]
\item \mbox{INPUT:} ~$A\in R^{m\times n}$, $b\in R^{m}$, $\ell$ , initial estimate $x_0$
\item \mbox{OUTPUT:} ~$x_\ell$
\item For $k=0, 1, 2, \ldots, \ell-1$ do
\item ~~~~Compute
\begin{align}
  \epsilon_{k}=\frac{1}{2}\left( \max _{1 \leq i_{k} \leq m}\left\{\frac{|r^{\left(i_{k}\right)}_k|^{2}}{\left\|A^{\left(i_{k}\right)}\right\|_{2}^{2}}\right\}+\frac{\left\|r_{k}\right\|_{2}^{2}}{\|A\|_{F}^{2}}\right).\notag
\end{align}
\item ~~~~Determine the index subset of positive integers
\begin{align}
 \mathcal{U}_{k}=\left\{i_{k}\Bigg|\frac{| r^{\left(i_{k}\right)}_k|^{2}} {\|A^{\left(i_{k}\right)}\|_{2}^{2}} \geq \epsilon_{k}\right\}.\notag
\end{align}

\item ~~~~Compute the $i$th entry $\tilde{r}_k^{(i)}$ of the vector $\tilde{r}_k$ according to
  $$
\tilde{r}_{k}^{(i)}=\left\{\begin{array}{ll}{r^{(i)}_{k},} & {\text { if } i \in \mathcal{U}_{k}}, \\ {0,} & {\text { otherwise. }}\end{array}\right.
$$
\item ~~~~Select $i_{k} \in \mathcal{U}_{k}$ with probability $\operatorname{Pr}\left(\mathrm{row}=i_{k}\right)=\frac{|\tilde{r}_{k}^{\left(i_{k}\right)}|^{2}}{\left\|\tilde{r}_{k}\right\|_{2}^{2}}$.
\item ~~~~Set
$$ x_{k+1}=x_{k}+\frac{   r^{ (i_{k}) }_{k} }{\| A^{(i_{k})}\|_{2}^{2}}( A^{(i_{k})})^{T}.$$
\item End for
\end{enumerate}
\end{alg}

The following greedy block Kaczmarz (GBK) method, i.e., Algorithm \ref{alg3}, was presented by Niu and Zheng \cite{Niu2020}, which can be seen as a block version of the GRK method. 
  \begin{alg}
\label{alg3}
The GBK method for the systems (\ref{1}).
\begin{enumerate}[]
\item \mbox{INPUT:} ~$A\in R^{m\times n}$, $b\in R^{m}$, $\ell$ , $\eta \in (0, 1]$, initial estimate $x_0$
\item \mbox{OUTPUT:} ~$x_\ell$
\item For $k=0, 1, 2, \ldots, \ell-1$ do
\item ~~~~Compute
\begin{align}
  \epsilon_{k}=\eta \cdot \max _{1 \leq i_{k} \leq m}\left\{\frac{|r^{\left(i_{k}\right)}_k|^{2}}{\left\|A^{\left(i_{k}\right)}\right\|_{2}^{2}}\right\}.\notag
\end{align}
\item ~~~~Determine the index subset of positive integers
\begin{align}
 \mathcal{U}_{k}=\left\{i_{k}\Bigg|\frac{| r^{\left(i_{k}\right)}_k|^{2}} {\|A^{\left(i_{k}\right)}\|_{2}^{2}} \geq \epsilon_{k}\right\}.\notag
\end{align}
\item ~~~~Set
\begin{align}
  x_{k+1}=x_{k}+A_{\mathcal{U}_{k}}^{\dagger}(b_{\mathcal{U}_{k}}-A_{\mathcal{U}_{k}}x_{k}).\notag
\end{align}
\item End for
\end{enumerate}
\end{alg}

 \section{The GRMK method }\label{sec3}
 The GRMK method is presented in Algorithm \ref{alg2}.
 Compared with the GRK method, the main differences are the methods for determining
the index subsets and the probability criterions for sampling an index. Specifically, the GRMK method 
determines the index subset $ \mathcal{I}_{k}$ using the 
combination of the maximum and average magnitude entries of the residual, and 
samples an index from the subset $ \mathcal{I}_{k}$ with probability that is 
proportional to the corresponding distance. On a high level, the GRMK method seems to change the order
of the first two main steps of Algorithm \ref{alg1}. However, it essentially comes from the maximum residual rule.

 \begin{alg}
\label{alg2}
The GRMK method for the systems (\ref{1}).
\begin{enumerate}[]
\item \mbox{INPUT:} ~$A\in R^{m\times n}$, $b\in R^{m}$, $\ell$ , initial estimate $x_0$
\item \mbox{OUTPUT:} ~$x_\ell$
\item For $k=0, 1, 2, \ldots, \ell-1$ do
\item ~~~~Compute
\begin{align}
  \delta_{k}=\frac{1}{2}\left( \max _{1 \leq i\leq m}|r^{\left(i\right)}_k|^{2}+\sum \limits _{i=1}^{m}\frac{ \|A^{(i)}\|_{2}^{2}}{\|A\|_{F}^{2}}|r^{\left(i\right)}_k|^{2}\right).\notag
\end{align}
\item ~~~~Determine the index subset of positive integers
\begin{align}
 \mathcal{I}_{k}=\left\{i_{k}\Bigg|| r^{\left(i_{k}\right)}_k|^{2}\geq \delta_{k}\right\}.\notag
\end{align}
\item ~~~~Compute the $i$th entry $\tilde{d}_k^{(i)}$ of the vector $\tilde{d}_k$ according to
  $$
\tilde{d}_{k}^{(i)}=\left\{\begin{array}{ll}{\frac{|r^{\left(i\right)}_k|^{2}}{\left\|A^{\left(i\right)}\right\|_{2}^{2}},} & {\text { if } i \in \mathcal{I}_{k}}, \\ {0,} & {\text { otherwise. }}\end{array}\right.
$$
\item ~~~~Select $i_{k} \in \mathcal{I}_{k}$ with probability $\operatorname{Pr}\left(\mathrm{row}=i_{k}\right)=\frac{ \tilde{d}_{k}^{\left(i_{k}\right)} }{\left\|\tilde{d}_{k}\right\|_{1}}$.
\item ~~~~Set
$$ x_{k+1}=x_{k}+\frac{   r^{ (i_{k}) }_{k} }{\| A^{(i_{k})}\|_{2}^{2}}( A^{(i_{k})})^{T}.$$
\item End for
\end{enumerate}
\end{alg}

\begin{remark}
\label{rmk1}
Note that if
$$|r^{ (i_{k}) }_{k}|^2 =  \max \limits _{1 \leq i  \leq m}|r^{ (i) }_{k}|^2 ,$$
then $i_k\in\mathcal{I}_{k}.$  This is because
$$
\max \limits _{1 \leq i  \leq m}|r^{ (i) }_{k}|^2 \geq \sum \limits _{i=1}^{m}\frac{ \|A^{(i)}\|_{2}^{2}}{\|A\|_{F}^{2}}|r^{\left(i\right)}_k|^{2}$$
and $$
|r^{ (i_{k}) }_{k}|^2 =  \max \limits _{1 \leq i  \leq m}|r^{ (i) }_{k}|^2 \geq\frac{1}{2}\left( \max _{1 \leq i \leq m}|r^{\left(i\right)}_k|^{2}+\sum \limits _{i=1}^{m}\frac{ \|A^{(i)}\|_{2}^{2}}{\|A\|_{F}^{2}}|r^{\left(i\right)}_k|^{2}\right).
$$
So the index subset $\mathcal{I}_{k}$ in Algorithm \ref{alg2} is always nonempty. 
\end{remark}

\begin{remark}
\label{rmk12}
As done in \cite{Bai2018r,Zhang2020}, we can introduce 
an arbitrary relaxation parameter $\theta \in [0, 1]$ into the
quantity $\delta_{k}$ in Algorithm \ref{alg2}, that is,
\begin{align}
  \delta_{k}= \theta\cdot \max _{1 \leq i\leq m}|r^{\left(i\right)}_k|^{2}+(1-\theta)\cdot\sum \limits _{i=1}^{m}\frac{ \|A^{(i)}\|_{2}^{2}}{\|A\|_{F}^{2}}|r^{\left(i\right)}_k|^{2}.\notag
\end{align}
Then, 
the relaxed greedy randomized Motzkin-Kaczmarz method can be devised. 
In this case, setting $\theta=1$, i.e., $\delta_{k}= \max \limits_{1 \leq i\leq m}|r^{\left(i\right)}_k|^{2}$, and $i_{k}={\rm arg} \max \limits _{i\in \mathcal{I}_{k}}\left\{\tilde{d}_{k}^{(i)}\right\}$, we can recover the 
greedy Kaczmarz method proposed in \cite{li2020novel}.
\end{remark}

Now, we bound the expected rate of convergence for Algorithm \ref{alg2}.
\begin{theorem}
\label{theorem1}
From an initial guess $x_0\in{\rm R(A^T)}$, the sequence $\{x_k\}_{k=0}^\infty$ generated by the GRMK method converges linearly in expectation to the least-Euclidean-norm solution $x_{\star}=A^{\dag}b$ and
\begin{align}
\textrm{E}\|x_{1}-x_\star\|^{2}_2
\leq\left(1- \frac{\min\limits_{i }\|A^{(i)}\|_{2}^{2}}{ \max \limits_{i \in \mathcal{I}_{k}}\|A^{(i)}\|^2_2}\frac{\lambda_{\min}(A^TA)}{\|A\|_{F}^{2}} \right)\| x_{0}-x_\star \|^{2}_2, \label{4eq}
\end{align}
and
\begin{eqnarray}
\textrm{E}\|x_{k+1}-x_\star\|^{2}_2\leq\left(1-\frac{1}{2}\frac{\min\limits_{i\neq i_{k-1}}\|A^{(i)}\|_{2}^{2}}{ \max\limits_{i \in \mathcal{I}_{k}}\|A^{(i)}\|^2_2}\frac{\lambda_{\min}(A^TA)}{\|A\|_{F}^{2}}\left(\frac{\|A\|_{F}^{2}}{\|A\|_{F}^{2}-\min \limits_{1 \leq i \leq m}\left\|A^{(i)}\right\|_{2}^{2}}+1\right)\right)\| x_{k}-x_\star \|^{2}_2,\notag
\\
\quad k=1, 2, \ldots.\label{5eq}
\end{eqnarray}
Moreover, let $\alpha=\min\left\{\frac{\min\limits_{i\neq i_{k-1}}\|A^{(i)}\|_{2}^{2}}{ \max\limits_{i \in \mathcal{I}_{k}}\|A^{(i)}\|^2_2}\right\}, k=1, 2, \ldots.$ Then
\begin{eqnarray}
\textrm{E}\|x_{k}-x_\star\|^{2}_2
&\leq& \left(1-\frac{\alpha}{2}\frac{\lambda_{\min}(A^TA)}{\|A\|_{F}^{2}}(\frac{\|A\|_{F}^{2}}{\|A\|_{F}^{2}-\min \limits_{1 \leq i \leq m}\left\|A^{(i)}\right\|_{2}^{2}}+1)\right)^{k-1}\notag
\\
&\times& \left(1- \frac{\min\limits_{i }\|A^{(i)}\|_{2}^{2}}{ \max \limits_{i \in \mathcal{I}_{k}}\|A^{(i)}\|^2_2}\frac{\lambda_{\min}(A^TA)}{\|A\|_{F}^{2}} \right) \| x_{0}-x_\star \|^{2}_2,
\quad k=1, 2, \ldots.\label{6eq}
\end{eqnarray}
\end{theorem}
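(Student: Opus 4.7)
The plan is to adapt the Bai--Wu analysis of GRK, substituting the Motzkin-style sampling weights used in Algorithm~\ref{alg2} and lower-bounding the threshold $\delta_k$ accordingly. First, because $x_{k+1}$ is the orthogonal projection of $x_k$ onto $\{x:A^{(i_k)}x=b^{(i_k)}\}$ and $x_\star$ lies in that hyperplane, Pythagoras yields
\[
\|x_{k+1}-x_\star\|_2^2=\|x_k-x_\star\|_2^2-\|x_{k+1}-x_k\|_2^2,
\]
and the update rule supplies $\|x_{k+1}-x_k\|_2^2 = |r_k^{(i_k)}|^2/\|A^{(i_k)}\|_2^2 \ge |r_k^{(i_k)}|^2/\max_{j\in\mathcal{I}_k}\|A^{(j)}\|_2^2$ since $i_k\in\mathcal{I}_k$. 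Taking conditional expectation with respect to $i_k$ and substituting $\Pr(i_k=i)=\tilde d_k^{(i)}/\|\tilde d_k\|_1$ gives
\[
\textrm{E}\bigl[|r_k^{(i_k)}|^2 \,\big|\, x_k\bigr]=\frac{1}{\|\tilde d_k\|_1}\sum_{i\in\mathcal{I}_k}\frac{|r_k^{(i)}|^4}{\|A^{(i)}\|_2^2}\ge \delta_k,
\]
where the last step factors out $|r_k^{(i)}|^2\ge\delta_k$ (valid on $\mathcal{I}_k$) and recognizes the remaining sum as $\|\tilde d_k\|_1$. Thus $\textrm{E}[\|x_{k+1}-x_k\|_2^2\,|\,x_k]\ge \delta_k/\max_{j\in\mathcal{I}_k}\|A^{(j)}\|_2^2$, and everything reduces to a lower bound on $\delta_k$ in terms of $\|x_k-x_\star\|_2^2$.

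For the base case $k=0$, I would drop the max term in $\delta_0$ and keep only the weighted-average term, obtaining $\delta_0 \ge \sum_i (\|A^{(i)}\|_2^2/\|A\|_F^2)|r_0^{(i)}|^2 \ge (\min_i\|A^{(i)}\|_2^2/\|A\|_F^2)\|r_0\|_2^2$. Since both $x_0$ and $x_\star=A^\dagger b$ lie in $\mathrm{R}(A^T)$, Lemma~\ref{lem2} applies to $x_0-x_\star$ and converts $\|r_0\|_2^2=\|A(x_0-x_\star)\|_2^2$ into $\lambda_{\min}(A^TA)\|x_0-x_\star\|_2^2$, delivering (\ref{4eq}).

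The main obstacle --- and the source of the sharper factor in (\ref{5eq}) --- is the refined bound for $k\ge1$, which exploits the Motzkin-type identity $r_k^{(i_{k-1})}=0$: the update at step $k-1$ forces $A^{(i_{k-1})}x_k=b^{(i_{k-1})}$, so the $i_{k-1}$-th coordinate of $r_k$ vanishes and every sum indexed by $i$ may be restricted to $i\ne i_{k-1}$. For the max term I would lower-bound $\max_i|r_k^{(i)}|^2=\max_{i\ne i_{k-1}}|r_k^{(i)}|^2$ by the weighted average with weights $\|A^{(i)}\|_2^2/(\|A\|_F^2-\|A^{(i_{k-1})}\|_2^2)$ (which sum to one over $i\ne i_{k-1}$) and then pull out $\min_{i\ne i_{k-1}}\|A^{(i)}\|_2^2$, weakening $\|A^{(i_{k-1})}\|_2^2\ge\min_j\|A^{(j)}\|_2^2$ to obtain $\ge (\min_{i\ne i_{k-1}}\|A^{(i)}\|_2^2/(\|A\|_F^2-\min_j\|A^{(j)}\|_2^2))\|r_k\|_2^2$. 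The weighted-average term, restricted similarly to $i\ne i_{k-1}$, gives $\ge(\min_{i\ne i_{k-1}}\|A^{(i)}\|_2^2/\|A\|_F^2)\|r_k\|_2^2$. Adding the two halves of $\delta_k$ and factoring out $\min_{i\ne i_{k-1}}\|A^{(i)}\|_2^2/\|A\|_F^2$ produces exactly the bracketed expression $\|A\|_F^2/(\|A\|_F^2-\min_j\|A^{(j)}\|_2^2)+1$ appearing in (\ref{5eq}); a final application of Lemma~\ref{lem2} converts $\|r_k\|_2^2$ to $\|x_k-x_\star\|_2^2$.

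Finally, for (\ref{6eq}), I would observe that $x_k-x_\star\in\mathrm{R}(A^T)$ is preserved throughout the iteration, because each update only adds a multiple of $(A^{(i_k)})^T$ to $x_k$, so Lemma~\ref{lem2} remains applicable at every step. Taking total expectations, applying (\ref{4eq}) at step $0$ and (\ref{5eq}) inductively at steps $1,2,\dots,k-1$, and replacing the iteration-dependent ratio $\min_{i\ne i_{k-1}}\|A^{(i)}\|_2^2/\max_{i\in\mathcal{I}_k}\|A^{(i)}\|_2^2$ by its uniform infimum $\alpha$ telescopes to (\ref{6eq}). The delicate part of the whole argument is the simultaneous use of $r_k^{(i_{k-1})}=0$ and the splitting of $\delta_k$ into its two halves; the remaining manipulations are routine.
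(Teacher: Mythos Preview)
Your proposal is correct and follows essentially the same route as the paper's own proof: Pythagoras plus orthogonality of the projection, the same expectation bound $\textrm{E}\bigl[|r_k^{(i_k)}|^2\,\big|\,x_k\bigr]\ge\delta_k$ (the paper leaves this as a weighted sum but uses the identical $|r_k^{(i)}|^2\ge\delta_k$ step), the key observation $r_k^{(i_{k-1})}=0$ to restrict sums to $i\ne i_{k-1}$, Lemma~\ref{lem2} to pass from $\|r_k\|_2^2$ to $\|x_k-x_\star\|_2^2$, and induction for (\ref{6eq}). The only cosmetic difference is that for $k\ge1$ the paper factors the weighted-average term out of $\delta_k$ before bounding the ratio, whereas you bound the two halves of $\delta_k$ separately and then recombine; both routes land on exactly the same expression.
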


\begin{proof}
From the update formula in Algorithm \ref{alg2}, we have
 $$x_{k+1}-x_{k}=\frac{ r^{(i_{k})}_k}{ \| A^{\left(i_{k}\right)} \|_{2}^{2}}( A^{(i_{k})})^{T},$$
which implies that $x_{k+1}-x_{k}$ is parallel to $( A^{(i_{k})})^{T}$. Meanwhile,
\begin{eqnarray*}
A^{\left(i_{k}\right)}(x_{k+1}-x_{\star})&=&A^{\left(i_{k}\right)}\left(x_{k}-x_\star+\frac{ r^{(i_{k})}_k}{ \| A^{\left(i_{k}\right)} \|_{2}^{2}}( A^{(i_{k})})^{T}\right)
\\
&=& A^{\left(i_{k}\right)}\left(x_{k}-x_\star\right)+r^{(i_{k})}_k,
\end{eqnarray*}
which together with the fact $Ax_{\star}=b$ gives
\begin{eqnarray*}
A^{\left(i_{k}\right)}(x_{k+1}-x_{\star})&=& ( A^{\left(i_{k}\right)}x_{k}-b^{\left(i_{k}\right)})+(b^{\left(i_{k}\right)}- A^{\left(i_{k}\right)}x_{k})=0.
\end{eqnarray*}
Then $x_{k+1}-x_{\star}$ is orthogonal to $A^{\left(i_{k}\right)}$. Thus, the vector $x_{k+1}-x_{k}$ is perpendicular to the vector $x_{k+1}-x_{\star}$. By the Pythagorean theorem, we get
\begin{equation*}
\label{7}
\|x_{k+1}-x_{\star}\|^2_2=\|x_{k}-x_{\star}\|^2_2-\|x_{k+1}-x_{k}\|^2_2.
\end{equation*}
Now, taking expectation of both sides, we have
\begin{align}
\textrm{E}\|x_{k+1}-x_\star\|^{2}_2
&=\| x_{k}-x_\star \|^{2}_2- \textrm{E}\|x_{k+1}-x_{k}\|^2_2 \notag
\\
&= \| x_{k}-x_\star \|^{2}_2- \sum  \limits _{i_k \in \mathcal{I}_{k}}\frac{\tilde{d}_{k}^{\left(i_{k}\right)}}{\sum  \limits _{i_k \in \mathcal{I}_{k}}\tilde{d}_{k}^{\left(i_{k}\right)}} \frac{ |r^{(i_{k})}_k|^2}{ \| A^{\left(i_{k}\right)} \|_{2}^{2}}\notag
\\
&\leq\| x_{k}-x_\star \|^{2}_2- \frac{1}{ \max\limits_{i \in \mathcal{I}_{k}}\|A^{(i)}\|^2_2}\sum  \limits _{i_k \in \mathcal{I}_{k}}\frac{\tilde{d}_{k}^{\left(i_{k}\right)}}{\sum  \limits _{i_k \in \mathcal{I}_{k}}\tilde{d}_{k}^{\left(i_{k}\right)}} |r^{(i_{k})}_k|^2.\label{eq1}
\end{align}

For $k=0$, according to Algorithm \ref{alg2}, we have
\begin{align}
| r^{\left(i_{0}\right)}_0|^{2}
&\geq \frac{1}{2}\left( \max _{1 \leq i\leq m}|r^{\left(i\right)}_0|^{2}+\sum \limits _{i=1}^{m}\frac{ \|A^{(i)}\|_{2}^{2}}{\|A\|_{F}^{2}}|r^{\left(i\right)}_0|^{2}\right)\notag
\\
& = \frac{1}{2}\sum \limits _{i=1 }^{m}\frac{ \|A^{(i)}\|_{2}^{2}}{\|A\|_{F}^{2}}|r^{\left(i\right)}_0|^{2}(\frac{\max \limits_{1 \leq i\leq m}|r^{\left(i\right)}_0|^{2}}{\sum \limits _{i=1 }^{m}\frac{ \|A^{(i)}\|_{2}^{2}}{\|A\|_{F}^{2}}|r^{\left(i\right)}_0|^{2}}+1)\notag
\\
&\geq \frac{1}{2}\sum \limits_{i=1 }^{m}\frac{ \|A^{(i)}\|_{2}^{2}}{\|A\|_{F}^{2}}|r^{\left(i\right)}_0|^{2}(\frac{1}{\sum \limits _{i=1 }^{m}\frac{ \|A^{(i)}\|_{2}^{2}}{\|A\|_{F}^{2}}}+1)\notag
\\
&\geq \frac{1}{2}\frac{\min\limits_{i}\|A^{(i)}\|_{2}^{2}}{\|A\|_{F}^{2}} \sum \limits _{i=1 }^{m}|r^{\left(i\right)}_0|^{2}(\frac{1}{\sum \limits _{i=1 }^{m}\frac{ \|A^{(i)}\|_{2}^{2}}{\|A\|_{F}^{2}}}+1)\notag
\\
&=  \frac{\min\limits_{i }\|A^{(i)}\|_{2}^{2}}{\|A\|_{F}^{2}} \|r_0\|_2^{2},\notag
\end{align}
which together with Lemma \ref{lem2} yields
\begin{align}
| r^{\left(i_{0}\right)}_0|^{2}
&\geq \frac{\min\limits_{i }\|A^{(i)}\|_{2}^{2}\lambda_{\min}(A^TA)}{\|A\|_{F}^{2}} \| x_{0}-x_\star \|^{2}_2.\label{eq21}
\end{align}
Thus, substituting (\ref{eq21}) into (\ref{eq1}), we get
\begin{align}
\textrm{E}\|x_{1}-x_\star\|^{2}_2
&\leq\| x_{0}-x_\star \|^{2}_2-\frac{\min\limits_{i }\|A^{(i)}\|_{2}^{2}\lambda_{\min}(A^TA)}{\max \limits_{i \in \mathcal{I}_{k}}\|A^{(i)}\|^2_2\|A\|_{F}^{2}} \| x_{0}-x_\star \|^{2}_2 \notag
\\
&\leq\left(1- \frac{\min\limits_{i }\|A^{(i)}\|_{2}^{2}}{ \max \limits_{i \in \mathcal{I}_{k}}\|A^{(i)}\|^2_2}\frac{\lambda_{\min}(A^TA)}{\|A\|_{F}^{2}} \right)\| x_{0}-x_\star \|^{2}_2,  \notag
\end{align}
which is just the estimate (\ref{4eq}).

For $k\geq1$, to find the lower bound of $| r^{\left(i_{k}\right)}_k|^{2}$, first note that
\begin{align} r_{k}^{\left(i_{k-1}\right)} &=b^{\left(i_{k-1}\right)}-A^{\left(i_{k-1}\right)} x_{k} \notag\\ &=b^{\left(i_{k-1}\right)}-A^{\left(i_{k-1}\right)}\left(x_{k-1}+\frac{r^{\left(i_{k-1}\right)}_{k-1}}{\left\|A^{\left(i_{k-1}\right)}\right\|_{2}^{2}}\left(A^{\left(i_{k-1}\right)}\right)^{T}\right) \notag
\\ &=b^{\left(i_{k-1}\right)}-A^{\left(i_{k-1}\right)} x_{k-1}-r^{\left(i_{k-1}\right)}_{k-1} \notag
\\
 &=0,\label{eq0}
 \end{align}
and
\begin{align}
| r^{\left(i_{k}\right)}_k|^{2}
&\geq \frac{1}{2}\left( \max _{1 \leq i\leq m}|r^{\left(i\right)}_k|^{2}+\sum \limits _{i=1}^{m}\frac{ \|A^{(i)}\|_{2}^{2}}{\|A\|_{F}^{2}}|r^{\left(i\right)}_k|^{2}\right)\notag
\\
& = \frac{1}{2}\sum \limits _{i=1,i\neq i_{k-1}}^{m}\frac{ \|A^{(i)}\|_{2}^{2}}{\|A\|_{F}^{2}}|r^{\left(i\right)}_k|^{2}(\frac{\max \limits_{1 \leq i\leq m}|r^{\left(i\right)}_k|^{2}}{\sum \limits _{i=1,i\neq i_{k-1}}^{m}\frac{ \|A^{(i)}\|_{2}^{2}}{\|A\|_{F}^{2}}|r^{\left(i\right)}_k|^{2}}+1)\notag
\\
&\geq \frac{1}{2}\sum \limits _{i=1,i\neq i_{k-1}}^{m}\frac{ \|A^{(i)}\|_{2}^{2}}{\|A\|_{F}^{2}}|r^{\left(i\right)}_k|^{2}(\frac{1}{\sum \limits _{i=1,i\neq i_{k-1}}^{m}\frac{ \|A^{(i)}\|_{2}^{2}}{\|A\|_{F}^{2}}}+1)\notag
\\
&\geq \frac{1}{2}\frac{\min\limits_{i\neq i_{k-1}}\|A^{(i)}\|_{2}^{2}}{\|A\|_{F}^{2}} \sum \limits _{i=1,i\neq i_{k-1}}^{m}|r^{\left(i\right)}_k|^{2}(\frac{1}{\sum \limits _{i=1,i\neq i_{k-1}}^{m}\frac{ \|A^{(i)}\|_{2}^{2}}{\|A\|_{F}^{2}}}+1)\notag
\\
&= \frac{1}{2}\frac{\min\limits_{i\neq i_{k-1}}\|A^{(i)}\|_{2}^{2}}{\|A\|_{F}^{2}}(\frac{1}{\sum \limits _{i=1,i\neq i_{k-1}}^{m}\frac{ \|A^{(i)}\|_{2}^{2}}{\|A\|_{F}^{2}}}+1)\|r_k\|_2^{2}.\notag
\end{align}
Further, considering Lemma \ref{lem2}, we have
\begin{align}
| r^{\left(i_{k}\right)}_k|^{2}
&\geq \frac{1}{2}\frac{\min\limits_{i\neq i_{k-1}}\|A^{(i)}\|_{2}^{2}}{\|A\|_{F}^{2}}(\frac{1}{\sum \limits _{i=1,i\neq i_{k-1}}^{m}\frac{ \|A^{(i)}\|_{2}^{2}}{\|A\|_{F}^{2}}}+1)\lambda_{\min}(A^TA)\| x_{k}-x_\star \|^{2}_2\notag
\\
&\geq\frac{1}{2}\frac{\min\limits_{i\neq i_{k-1}}\|A^{(i)}\|_{2}^{2}\lambda_{\min}(A^TA)}{\|A\|_{F}^{2}}(\frac{\|A\|_{F}^{2}}{\|A\|_{F}^{2}-\min \limits_{1 \leq i \leq m}\left\|A^{(i)}\right\|_{2}^{2}}+1)\| x_{k}-x_\star \|^{2}_2.\label{eq2}
\end{align}
Thus, substituting (\ref{eq2}) into (\ref{eq1}), we get the estimate (\ref{5eq}). By induction on the iteration index $k$, we can obtain the estimate (\ref{6eq}).
\end{proof}
\begin{remark}
\label{rmk2}
According to 
(\ref{eq0}), we know that $r_{k}^{\left(i_{k-1}\right)}=0$, which implies that $i_{k-1}\notin \mathcal{I}_{k}$. So the GRMK method can make sure 
the same index will never be chosen twice in iteration and we also have
\begin{align}
\frac{\min\limits_{i\neq i_{k-1}}\|A^{(i)}\|_{2}^{2}}{ \max\limits_{i \in \mathcal{I}_{k}}\|A^{(i)}\|^2_2}\leq1.\label{223w0}
\end{align}
\end{remark}
\begin{remark}
\label{rmk3}
For the GRK method, the error estimate in expectation given in \cite{Bai2018} is
\begin{align}
\textrm{E}\left\|x_{k+1}-x_{\star}\right\|_{2}^{2} \leq\left(1-\frac{1}{2} \frac{\lambda_{\min}(A^TA)}{\|A\|_{F}^{2}}\left(\frac{\|A\|_{F}^{2}}{\|A\|_{F}^{2}-\min \limits_{1 \leq i \leq m}\left\|A^{(i)}\right\|_{2}^{2}}+1\right)\right)\| x_{k}-x_\star \|^{2}_2. \label{223wewe23000}
\end{align}
Combining (\ref{223w0}) and (\ref{223wewe23000}), we can get
\begin{align}
&1-\frac{1}{2} \frac{\lambda_{\min}(A^TA)}{\|A\|_{F}^{2}}\left(\frac{\|A\|_{F}^{2}}{\|A\|_{F}^{2}-\min \limits_{1 \leq i \leq m}\left\|A^{(i)}\right\|_{2}^{2}}+1\right)\notag
\\
&\leq 1-\frac{1}{2}\frac{\min\limits_{i\neq i_{k-1}}\|A^{(i)}\|_{2}^{2}}{ \max\limits_{i \in \mathcal{I}_{k}}\|A^{(i)}\|^2_2}\frac{\lambda_{\min}(A^TA)}{\|A\|_{F}^{2}}\left(\frac{\|A\|_{F}^{2}}{\|A\|_{F}^{2}-\min \limits_{1 \leq i \leq m}\left\|A^{(i)}\right\|_{2}^{2}}+1\right)<1.\notag
\end{align}
That is, the convergence factor of the GRMK method is indeed smaller than 1 and is larger 
than that of the GRK method. However, as pointed out in \cite{Bai2018r}, the convergence factor only describes the worst case of the algorithm and is just the upper bound of 
the actual convergence rate. 
So, these convergence factors can not be used to evaluate the actual convergence speed of algorithms directly. To make this fact clearer, we present some numerical results in Fig. \ref{fi1} to illustrate 
the convergence factors and the actual convergence rates of the GRMK and GRK methods, where 
the definition of the actual convergence rate is taken from \cite{Bai2018r}
\begin{align}\label{2convergence}
\rho_{k} =\left(\frac{\textrm{E}\left\|x_{k}-x_{\star}\right\|_{2}^{2}}{\left\|x_{0}-x_{\star}\right\|_{2}^{2}}\right)^{1 / k}, \quad k \geq 1.
\end{align}

\begin{figure}[!htb]
 \begin{center}
\includegraphics [height=4cm,width=5.5cm ]{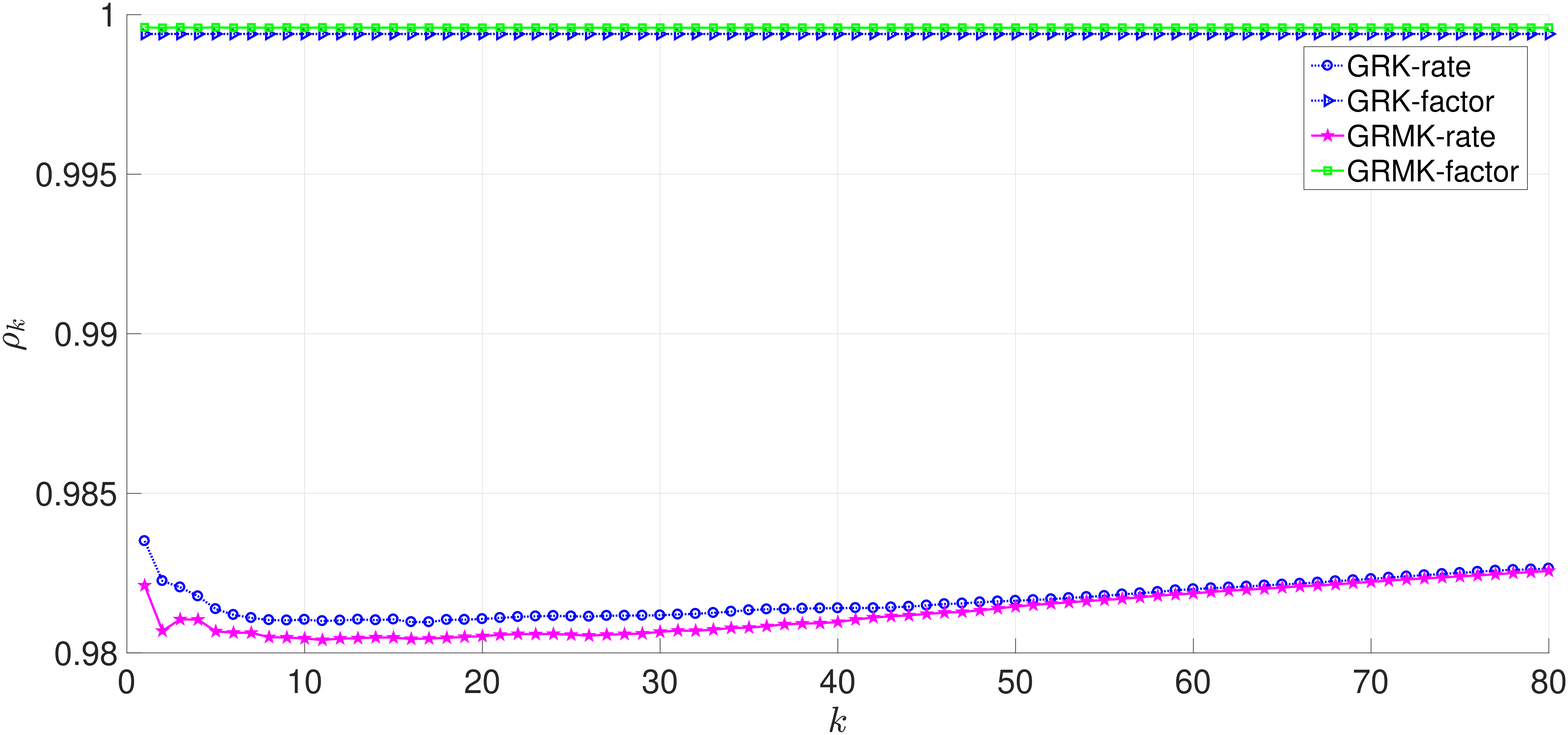}
\includegraphics [height=4cm,width=5.5cm ]{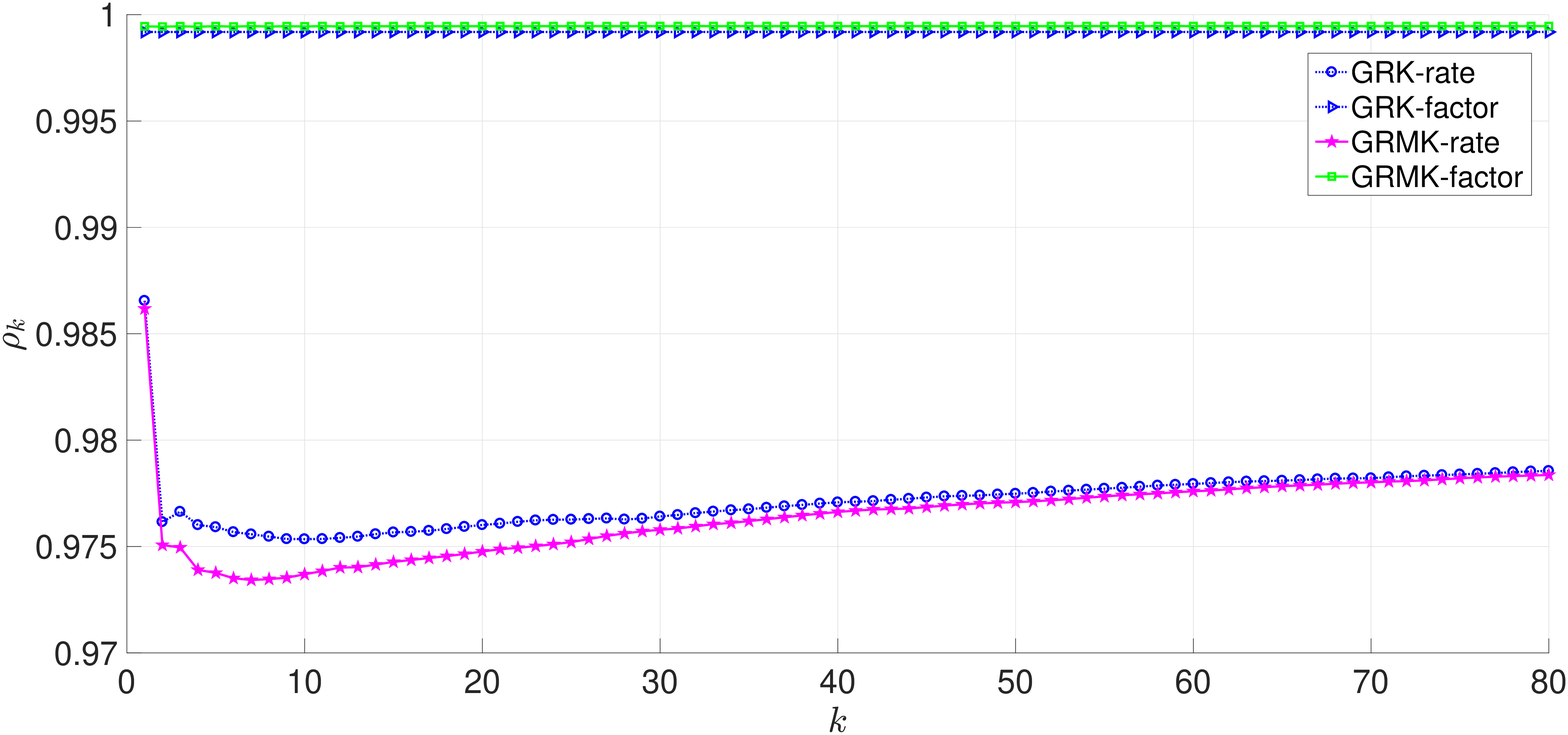}
 \end{center}
\caption{\rm Convergence factors/rates of the GRK and GRMK methods with matrices whose entries are uniformly distributed random numbers between 0 and 1. (left) $A$ is of order $2000\times 200$; (right) $A$ is of order $5000\times 200$. GRK (GRMK)-factor=convergence factor of GRK (GRMK) method;  GRK (GRMK)-rate=actual convergence rate of GRK (GRMK) method. }\label{fi1}
\end{figure}

Numerical results show that 
the convergence factors of the GRMK are indeed a little larger than those of the GRK method. However, the actual convergence rates of the GRMK method are a little smaller than those of the GRK method. In addition, we can also find that the convergence factors are the quite loose upper bounds of the actual convergence rates. 
\end{remark}

 \section{The GMBK method}\label{sec4}
The GMBK method is presented in Algorithm \ref{alg4}. 
Unlike the GRMK method, after determining the index subset $\mathcal{I}_{k}$, the GMBK method projects the current iterate onto the solution space of this subset simultaneously.
 \begin{alg}
\label{alg4}
The GMBK method for the systems (\ref{1}).
\begin{enumerate}[]
\item \mbox{INPUT:} ~$A\in R^{m\times n}$, $b\in R^{m}$, $\ell$ , initial estimate $x_0$
\item \mbox{OUTPUT:} ~$x_\ell$
\item For $k=0, 1, 2, \ldots, \ell-1$ do
\item ~~~~Compute
\begin{align}
  \delta_{k}=\frac{1}{2}\left( \max _{1 \leq i\leq m}|r^{\left(i\right)}_k|^{2}+\sum \limits _{i=1}^{m}\frac{ \|A^{(i)}\|_{2}^{2}}{\|A\|_{F}^{2}}|r^{\left(i\right)}_k|^{2}\right).\notag
\end{align}
\item ~~~~Determine the index subset of positive integers
\begin{align}
 \mathcal{I}_{k}=\left\{i_{k}\Bigg|| r^{\left(i_{k}\right)}_k|^{2}\geq \delta_{k}\right\}.\notag
\end{align}
\item ~~~~Set
\begin{align}
  x_{k+1}=x_{k}+A_{\mathcal{I}_{k}}^{\dagger}(b_{\mathcal{I}_{k}}-A_{\mathcal{I}_{k}}x_{k}).\notag
\end{align}
\item End for
\end{enumerate}
\end{alg}
\begin{remark}
\label{rmk4134}
In the GMBK method, we adopt the iterative format (\ref{rbgs}) to update the approximation. We can also update it in the form of the formula (\ref{rbgss}) to avoid computing the pseudoinverse.
\end{remark}

\begin{remark}
\label{rmk413344}
 Similar to Algorithm \ref{alg3}, i.e., the GBK method, we can set the quantity $ \delta_{k}$ in Algorithm \ref{alg4} in the following form 
 \begin{align}
  \delta_{k}=\xi \max _{1 \leq i\leq m}|r^{\left(i\right)}_k|^{2},\notag
\end{align}
where $\xi\in (0, 1]$ is a parameter.
\end{remark}

Next, we bound the rate of convergence for Algorithm \ref{alg4}.
\begin{theorem}
\label{theorem2}
From an initial guess $x_0\in{\rm R(A^T)}$, the sequence $\{x_k\}_{k=0}^\infty$ generated by the GMBK method converges linearly to the least-Euclidean-norm solution $x_{\star}=A^{\dag}b$ and
\begin{align}
\|x_{1}-x_\star\|^{2}_2
 \leq (1-|\mathcal{I}_{0}|\frac{\min\limits_{i }\|A^{(i)}\|_{2}^{2}}{\lambda_{\max} (A_{\mathcal{I}_{0}}^TA_{\mathcal{I}_{0}})}\frac{\lambda_{\min}(A^TA)}{\|A\|_{F}^{2}} )\| x_{0}-x_\star \|^{2}_2, \label{1a}
\end{align}
and
\begin{align}
 \|x_{k+1}-x_\star\|^{2}_2\leq(1-\frac{1}{2}|\mathcal{I}_{k}|\frac{\min\limits_{i \notin \mathcal{I}_{k-1}}\|A^{(i)}\|_{2}^{2}}{\lambda_{\max}    (A_{\mathcal{I}_{k}}^TA_{\mathcal{I}_{k}})}\frac{\lambda_{\min}(A^TA)}{\|A\|_{F}^{2}}(\frac{\|A\|_{F}^{2}}{{\|A\|_{F}^{2}-\left\|A_{\mathcal{I}_{k-1}}\right\|_{F}^{2}}}+1))\| x_{k}-x_\star \|^{2}_2,\notag
\\
\quad k=1, 2, \ldots. \label{2a}
\end{align}
\end{theorem}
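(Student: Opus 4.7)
The plan is to mirror the proof of Theorem \ref{theorem1}, with three modifications required by the block update (\ref{rbgs}): the Pythagorean identity now comes from orthogonality onto $R(A_{\mathcal{I}_k}^T)$ rather than a single row; the step length must be bounded from below by the full restricted residual via a $1/\lambda_{\max}(A_{\mathcal{I}_k}^TA_{\mathcal{I}_k})$ factor; and the ``killed'' subset of the residual is the entire previous batch $\mathcal{I}_{k-1}$ rather than a single index, which is precisely what produces the $\|A\|_F^2/(\|A\|_F^2-\|A_{\mathcal{I}_{k-1}}\|_F^2)$ term in (\ref{2a}).

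The first step is orthogonality. Since $x_{k+1}-x_k=A_{\mathcal{I}_k}^\dagger(b_{\mathcal{I}_k}-A_{\mathcal{I}_k}x_k)\in R(A_{\mathcal{I}_k}^T)$ and $A_{\mathcal{I}_k}A_{\mathcal{I}_k}^\dagger$ is the orthogonal projector onto $R(A_{\mathcal{I}_k})$, consistency $b_{\mathcal{I}_k}=A_{\mathcal{I}_k}x_\star$ gives $A_{\mathcal{I}_k}x_{k+1}=b_{\mathcal{I}_k}$ and therefore $A_{\mathcal{I}_k}(x_{k+1}-x_\star)=0$. Hence $(x_{k+1}-x_k)\perp(x_{k+1}-x_\star)$ and the Pythagorean theorem yields
\[
\|x_{k+1}-x_\star\|_2^2=\|x_k-x_\star\|_2^2-\|x_{k+1}-x_k\|_2^2.
\]
A useful byproduct is the block analogue of (\ref{eq0}): $r_k^{(i)}=0$ for every $i\in\mathcal{I}_{k-1}$ when $k\geq 1$. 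Because $x_{k+1}-x_k\in R(A_{\mathcal{I}_k}^T)\subset R(A^T)$, induction also keeps $x_k-x_\star$ inside $R(A^T)$, justifying Lemma \ref{lem2} below.

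Second, I would lower bound the step length. Since $b_{\mathcal{I}_k}-A_{\mathcal{I}_k}x_k\in R(A_{\mathcal{I}_k})$, a standard SVD argument gives $\|A_{\mathcal{I}_k}^\dagger v\|_2^2\geq \|v\|_2^2/\lambda_{\max}(A_{\mathcal{I}_k}^TA_{\mathcal{I}_k})$ for such $v$, while the defining inequality of $\mathcal{I}_k$ yields $\|b_{\mathcal{I}_k}-A_{\mathcal{I}_k}x_k\|_2^2=\sum_{i\in\mathcal{I}_k}|r_k^{(i)}|^2\geq|\mathcal{I}_k|\delta_k$. Combining these gives $\|x_{k+1}-x_k\|_2^2\geq |\mathcal{I}_k|\delta_k/\lambda_{\max}(A_{\mathcal{I}_k}^TA_{\mathcal{I}_k})$.

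The remaining work is to lower bound $\delta_k$ in terms of $\|x_k-x_\star\|_2^2$. Mimicking the chain of inequalities that leads to (\ref{eq21}) and (\ref{eq2}), but restricting the summations to $S=\{1,\ldots,m\}\setminus\mathcal{I}_{k-1}$ (taking $S=\{1,\ldots,m\}$ when $k=0$), which is legitimate precisely because $r_k$ vanishes on $\mathcal{I}_{k-1}$, I would lower bound the parenthesised factor in $\delta_k$ by $1+\|A\|_F^2/(\sum_{i\in S}\|A^{(i)}\|_2^2)$, extract $\min_{i\in S}\|A^{(i)}\|_2^2$, and apply Lemma \ref{lem2} through $\|r_k\|_2^2\geq\lambda_{\min}(A^TA)\|x_k-x_\star\|_2^2$. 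Substituting the resulting two-case lower bound on $\delta_k$ into the step-length inequality and then into the Pythagorean identity produces (\ref{1a}) and (\ref{2a}). The main obstacle I anticipate is packaging the two distinct uses of the pseudoinverse in the first two steps, namely the orthogonality identity $A_{\mathcal{I}_k}x_{k+1}=b_{\mathcal{I}_k}$ and the spectral bound on $\|A_{\mathcal{I}_k}^\dagger v\|_2^2$, each with its correct range condition; once that bookkeeping is in place, the rest is a direct adaptation of the scalar argument used for Theorem \ref{theorem1}.
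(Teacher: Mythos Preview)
Your proposal is correct and follows essentially the same route as the paper. The paper casts the orthogonality step via the projector identity $x_{k+1}-x_\star=(I-A_{\mathcal{I}_k}^\dagger A_{\mathcal{I}_k})(x_k-x_\star)$ and invokes Lemma~\ref{lem2} with $A_{\mathcal{I}_k}^\dagger$ to obtain the $\lambda_{\max}^{-1}(A_{\mathcal{I}_k}^TA_{\mathcal{I}_k})$ factor, but this is the same content as your Pythagorean identity plus SVD bound; the remaining lower bound on each $|r_k^{(i_k)}|^2$, exploiting that $r_k$ vanishes on all of $\mathcal{I}_{k-1}$, is handled in the paper exactly as you outline.
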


\begin{proof}
From Algorithm \ref{alg4}, using the fact $Ax_\star=b$, we have
\begin{eqnarray*}
x_{k+1}-x_\star&=&x_{k}-x_\star+A_{\mathcal{I}_{k}}^{\dagger}(b_{\mathcal{I}_{k}}-A_{\mathcal{I}_{k}}x_{k})
\\
&=& x_{k}-x_\star-A_{\mathcal{I}_{k}}^{\dagger}A_{\mathcal{I}_{k}}(x_{k}-x_{\star})
\\
&=&  (I- A_{\mathcal{I}_{k}}^{\dagger}A_{\mathcal{I}_{k}} )( x_{k}-x_\star).
\end{eqnarray*}
Since $A_{\mathcal{I}_{k}}^{\dagger}A_{\mathcal{I}_{k}}$ is an orthogonal projector, taking the square of the Euclidean norm on both sides and applying Pythagorean theorem, we get
\begin{align}
\|x_{k+1}-x_\star\|^{2}_2
&=   \|(I- A_{\mathcal{I}_{k}}^{\dagger}A_{\mathcal{I}_{k}} )( x_{k}-x_\star)\|^{2}_2 \notag
\\
&=  \|   x_{k}-x_\star \|^{2}_2- \| A_{\mathcal{I}_{k}}^{\dagger}A_{\mathcal{I}_{k}} ( x_{k}-x_\star)\|^{2}_2,  \notag
\end{align}
which together with Lemma \ref{lem2} and the fact $\lambda_{\min} ((A_{\mathcal{I}_{k}}^{\dagger})^TA_{\mathcal{I}_{k}}^{\dagger})=\lambda_{\max}^{-1}(A_{\mathcal{I}_{k}}^TA_{\mathcal{I}_{k}})$ yields
\begin{align}
\|x_{k+1}-x_\star\|^{2}_2
&\leq  \|   x_{k}-x_\star \|^{2}_2-\lambda_{\min} ((A_{\mathcal{I}_{k}}^{\dagger})^TA_{\mathcal{I}_{k}}^{\dagger})\| A_{\mathcal{I}_{k}} ( x_{k}-x_\star)\|^{2}_2\notag
\\
&= \| x_{k}-x_\star \|^{2}_2-\lambda_{\max}^{-1}(A_{\mathcal{I}_{k}}^TA_{\mathcal{I}_{k}})\| A_{\mathcal{I}_{k}} ( x_{k}-x_\star)\|^{2}_2 \notag
\\
&= \| x_{k}-x_\star \|^{2}_2-\lambda_{\max}^{-1}(A_{\mathcal{I}_{k}}^TA_{\mathcal{I}_{k}})\sum \limits _{i_k\in \mathcal{I}_{k}}| r^{\left(i_{k}\right)}_k|^{2}.   \label{er}
\end{align}
Now, the main task is to find the lower bound of $| r^{\left(i_{k}\right)}_k|^{2}$.

For $k=0$, substituting (\ref{eq21}) into (\ref{er}), we quickly obtain
\begin{align}
\|x_{1}-x_\star\|^{2}_2
&\leq \| x_{0}-x_\star \|^{2}_2-\lambda_{\max}^{-1}(A_{\mathcal{I}_{0}}^TA_{\mathcal{I}_{0}})|\mathcal{I}_{0}|\frac{\min\limits_{i }\|A^{(i)}\|_{2}^{2}\lambda_{\min}(A^TA)}{\|A\|_{F}^{2}} \| x_{0}-x_\star \|^{2}_2 \notag
\\
&\leq (1-|\mathcal{I}_{0}|\frac{\min\limits_{i }\|A^{(i)}\|_{2}^{2}}{\lambda_{\max} (A_{\mathcal{I}_{0}}^TA_{\mathcal{I}_{0}})}\frac{\lambda_{\min}(A^TA)}{\|A\|_{F}^{2}} )\| x_{0}-x_\star \|^{2}_2, \notag
\end{align}
which is just the estimate (\ref{1a}).

For $k\geq1$, similar to the derivation of the inequality (\ref{eq2}), we obtain
\begin{align}
| r^{\left(i_{k}\right)}_k|^{2}
&\geq \frac{1}{2}\frac{\min\limits_{i \notin \mathcal{I}_{k-1}}\|A^{(i)}\|_{2}^{2}\lambda_{\min}(A^TA)}{\|A\|_{F}^{2}}(\frac{\|A\|_{F}^{2}}{\sum \limits _{i=1,i\notin \mathcal{I}_{k-1}}^{m}\|A^{(i)}\|_{2}^{2}}+1)\| x_{k}-x_\star \|^{2}_2 \notag
\\
&= \frac{1}{2}\frac{\min\limits_{i \notin \mathcal{I}_{k-1}}\|A^{(i)}\|_{2}^{2}\lambda_{\min}(A^TA)}{\|A\|_{F}^{2}}(\frac{\|A\|_{F}^{2}}{\|A\|_{F}^{2}-\left\|A_{\mathcal{I}_{k-1}}\right\|_{F}^{2}}+1)\| x_{k}-x_\star \|^{2}_2. \label{eq21212}
\end{align}
Then, substituting (\ref{eq21212}) into (\ref{er}), we get
\begin{align}
\|x_{k+1}-x_\star\|^{2}_2
&\leq   \| x_{k}-x_\star \|^{2}_2-\frac{1}{2}|\mathcal{I}_{k}|\frac{\min\limits_{i \notin \mathcal{I}_{k-1}  }\|A^{(i)}\|_{2}^{2}}{\lambda_{\max}    (A_{\mathcal{I}_{k}}^TA_{\mathcal{I}_{k}})}\frac{\lambda_{\min}(A^TA)}{\|A\|_{F}^{2}}(\frac{\|A\|_{F}^{2}}{\|A\|_{F}^{2}-\left\|A_{\mathcal{I}_{k-1}}\right\|_{F}^{2}}+1)\| x_{k}-x_\star \|^{2}_2 \notag
\\
&=(1-\frac{1}{2}|\mathcal{I}_{k}|\frac{\min\limits_{i \notin \mathcal{I}_{k-1}}\|A^{(i)}\|_{2}^{2}}{\lambda_{\max}    (A_{\mathcal{I}_{k}}^TA_{\mathcal{I}_{k}})}\frac{\lambda_{\min}(A^TA)}{\|A\|_{F}^{2}}(\frac{\|A\|_{F}^{2}}{\|A\|_{F}^{2}-\left\|A_{\mathcal{I}_{k-1}}\right\|_{F}^{2}}+1)\| x_{k}-x_\star \|^{2}_2,\notag
\end{align}
which implies the desired result (\ref{2a}).
\end{proof}

\begin{remark}
\label{rmk5}
From Algorithms \ref{alg2} and \ref{alg4}, we know that $\{x|A_{\mathcal{I}_{k}}x=b_{\mathcal{I}_{k}}\}\subset\{x|A^{(i_{k})}x=b^{(i_{k})}\}$, where $i_{k}$ is the update index of the GRMK method. Thus, similar to the analysis in \cite{haddock2019greed}, we can obtain $$\|x_k^{GRMK}-x_{k-1}\|^2_2\leq\|x_k^{GMBK}-x_{k-1}\|^2_2,$$
which together with the fact
\begin{align*}
\left\|{x}_{k}^{GRMK}-{x}_{k-1}\right\|^{2}_2+\left\|{x}_{k}^{GRMK}-{x}_{\star}\right\|^{2}_2&=\left\|{x}_{k-1}-{x}_{\star}\right\|^{2}_2\\
&=\left\|{x}_{k}^{GMBK}-{x}_{k-1}\right\|^{2}_2+\left\|{x}_{k}^{GMBK}-{x}_{\star}\right\|^{2}_2, \notag
\end{align*}
leads to
$$\|x_k^{GMBK}-x_{\star}\|^2_2\leq\|x_k^{GRMK}-x_{\star}\|^2_2.$$
In the above expressions, $x_k^{GMBK}$ and $x_k^{GRMK}$ denote the next approximations generated by the GMBK and GRMK methods, respectively. Hence, the GMBK method converges at least as fast as the GRMK method.

In addition, since $\{x|A_{\mathcal{I}_{k}}x=b_{\mathcal{I}_{k}}\}\subset\{x|A^{(l_{k})}x=b^{(l_{k})}\}$, where $l_k={\rm arg} \max \limits _{1\leq i \leq m} |r^{\left(i\right)}_k|^{2} $, we can also get that the GMBK method must converge at least as fast as the Motzkin method.
\end{remark}

\begin{remark}
\label{rmk7}
To compare Algorithms \ref{alg3} and \ref{alg4} fairly, in the following, we set $\eta$ in Algorithm \ref{alg3} to be
 $$\eta=\frac{1}{2}+\frac{1}{2} \frac{\left\|b-A x_{k}\right\|_{2}^{2}}{\|A\|_{F}^{2}}\left(\max _{1 \leq i_k \leq m}\left\{\frac{\left|b^{i_k}-A^{( i_k)} x_{k}\right|^{2}}{\left\|A^{(i_k)}\right\|_{2}^{2}}\right\}\right)^{-1},$$
that is, set
\begin{align}
  \epsilon_{k}=\frac{1}{2}\left( \max _{1 \leq i_{k} \leq m}\left\{\frac{|r^{\left(i_{k}\right)}_k|^{2}}{\left\|A^{\left(i_{k}\right)}\right\|_{2}^{2}}\right\}+\frac{\left\|r_{k}\right\|_{2}^{2}}{\|A\|_{F}^{2}}\right). \notag
\end{align}
We refer to this block algorithm as the greedy distance block Kaczmarz (GDBK) method.

From Remark \ref{rmk3}, we know that the convergence factor cannot accurately explain the convergence speed of a method. So, we compare the actual convergence rates defined in (\ref{2convergence}) of the GDBK and GMBK methods using numerical experiments. The numerical results are listed in Fig. \ref{fi2}, which 
show that these actual convergence rates are almost the same.

\begin{figure}[!htb]
 \begin{center}
\includegraphics [height=4cm,width=5.5cm ]{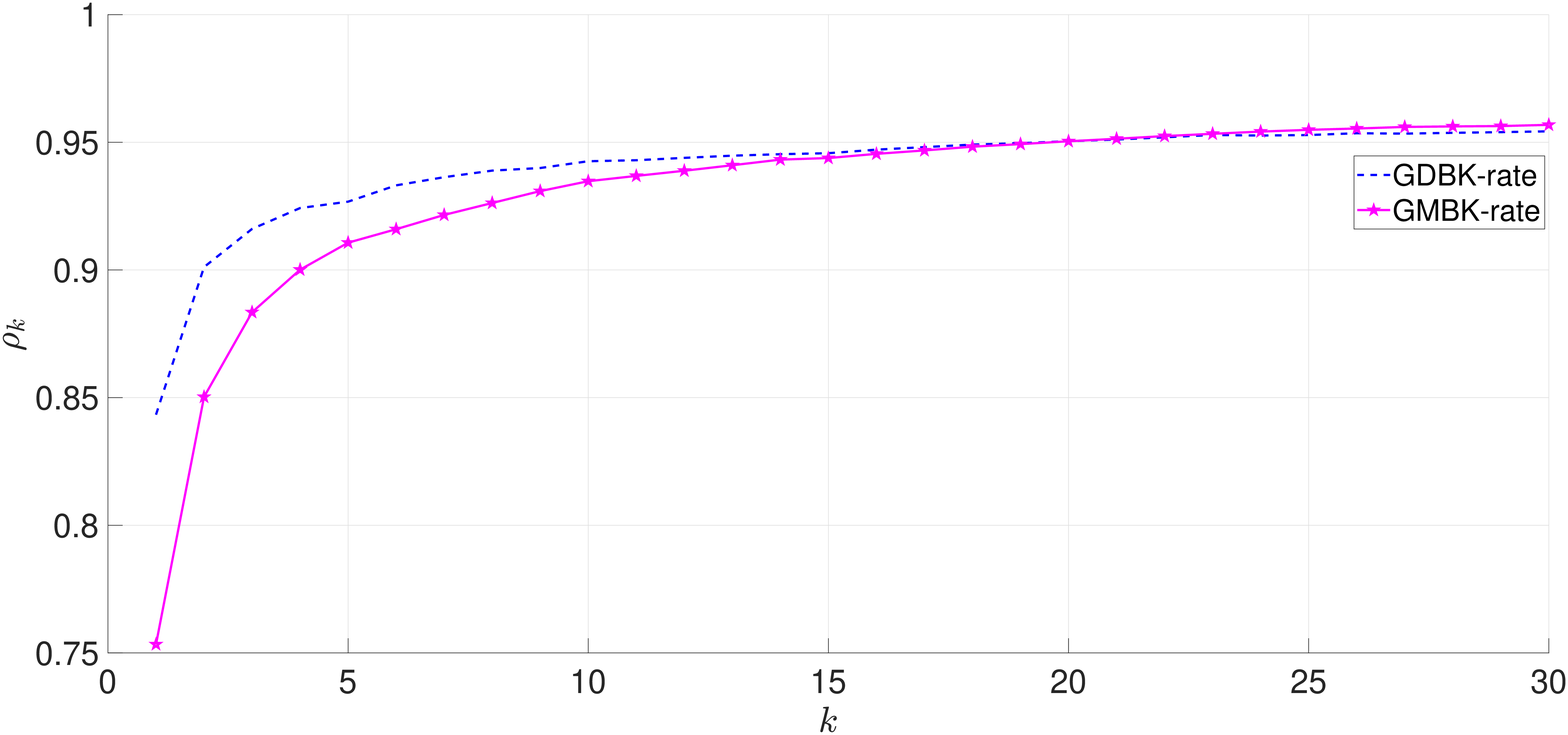}
\includegraphics [height=4cm,width=5.5cm ]{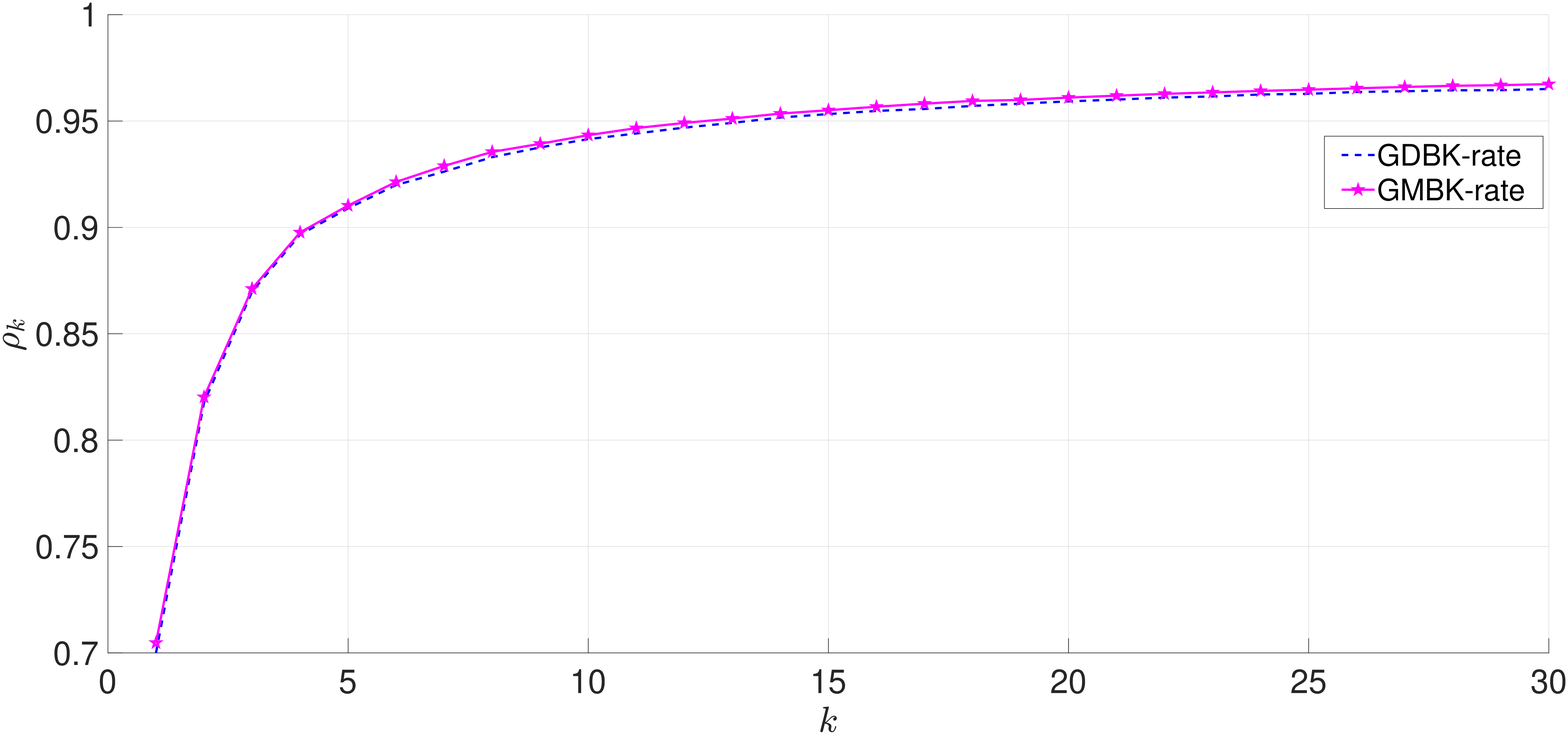}
 \end{center}
\caption{\rm Actual convergence rates of the GDBK and GMBK methods with matrices whose entries are uniformly distributed random numbers between 0 and 1. (left) $A$ is of order $1000\times 200$; (right) $A$ is of order $5000\times 1000$. GDBK (GMBK)-rate=actual convergence rate of GDBK (GMBK) method. }\label{fi2}
\end{figure}

\end{remark}

 \section{Numerical experiments}\label{sec5}
In this section, we mainly compare our new greedy Motzkin-Kaczmarz methods (GRMK, GMBK) with the greedy distance Kaczmarz methods (GRK, GDBK) in terms of the iteration numbers (denoted as ``Iteration'') and computing time in seconds (denoted as ``CPU time(s)'') with different matrices $A\in R^{m\times n}$. In all the following specific experiments, we generate the solution vector $x_\star\in R^{n}$ using the MATLAB function \texttt{randn}, and the vector $b\in R^{m}$ by setting $b=Ax_\star$. All experiments start from an initial vector $x_0=0$, and terminate once the \emph{relative solution error }(RES) or \emph{relative residual} (RR) at $x_{k}$ is less than $10^{-10}$, where RES and RR are defined by $$\rm RES=\frac{\left\|x_{k}-A^{\dagger}b\right\|^{2}_2}{\left\|A^{\dagger}b\right\|^{2}_2},~~\mathrm{RR}=\frac{\left\|b-A x_{k}\right\|_2^2}{\left\|b-A x_{0}\right\|_2^2}.$$

We first consider three main models of the coefficient matrix $A$: a Gaussian matrix with i.i.d. $N(0, 1)$ entries generated by the MATLAB function \texttt{randn}, a sparse normally distributed random matrix generated by the  MATLAB function \texttt{sprandn(m,n,0.2,0.8)}, and a sparse uniformly distributed random matrix generated by the  MATLAB function \texttt{sprand(m,n,0.2,0.8)}. Numerical results are reported in Figures \ref{fig1}--\ref{fig6}, which describe the $\log_{10}$(RES) or $\log_{10}$(RR) against the iteration number and CPU time. From these figures, we can find that for dense matrices, i.e., the matrices with i.i.d. $N(0, 1)$ entries, the performances of the GRMK and GRK methods are almost the same; for sparse matrices, the GRMK method outperforms the GRK method in CPU time;  for all the cases, the GMBK and GDBK methods have almost the same performance.


\begin{figure}[ht]
 \begin{center}
\includegraphics [height=4.5cm,width=10cm]{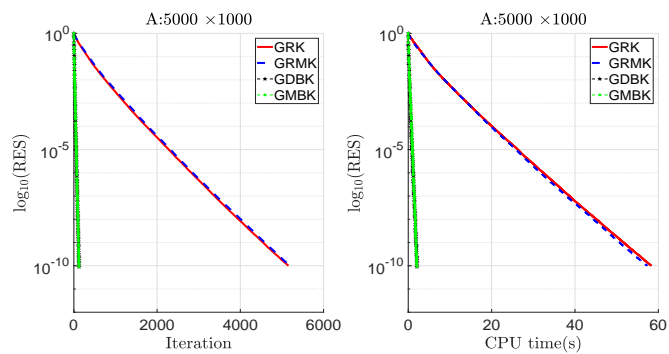}
 \end{center}
\caption{Performance of the methods on $A \in R^{ 5000 \times 1000}$ with i.i.d. $N(0, 1)$ entries.}\label{fig1}
\end{figure}
\begin{figure}[ht]
 \begin{center}
\includegraphics [height=4.5cm,width=10cm ]{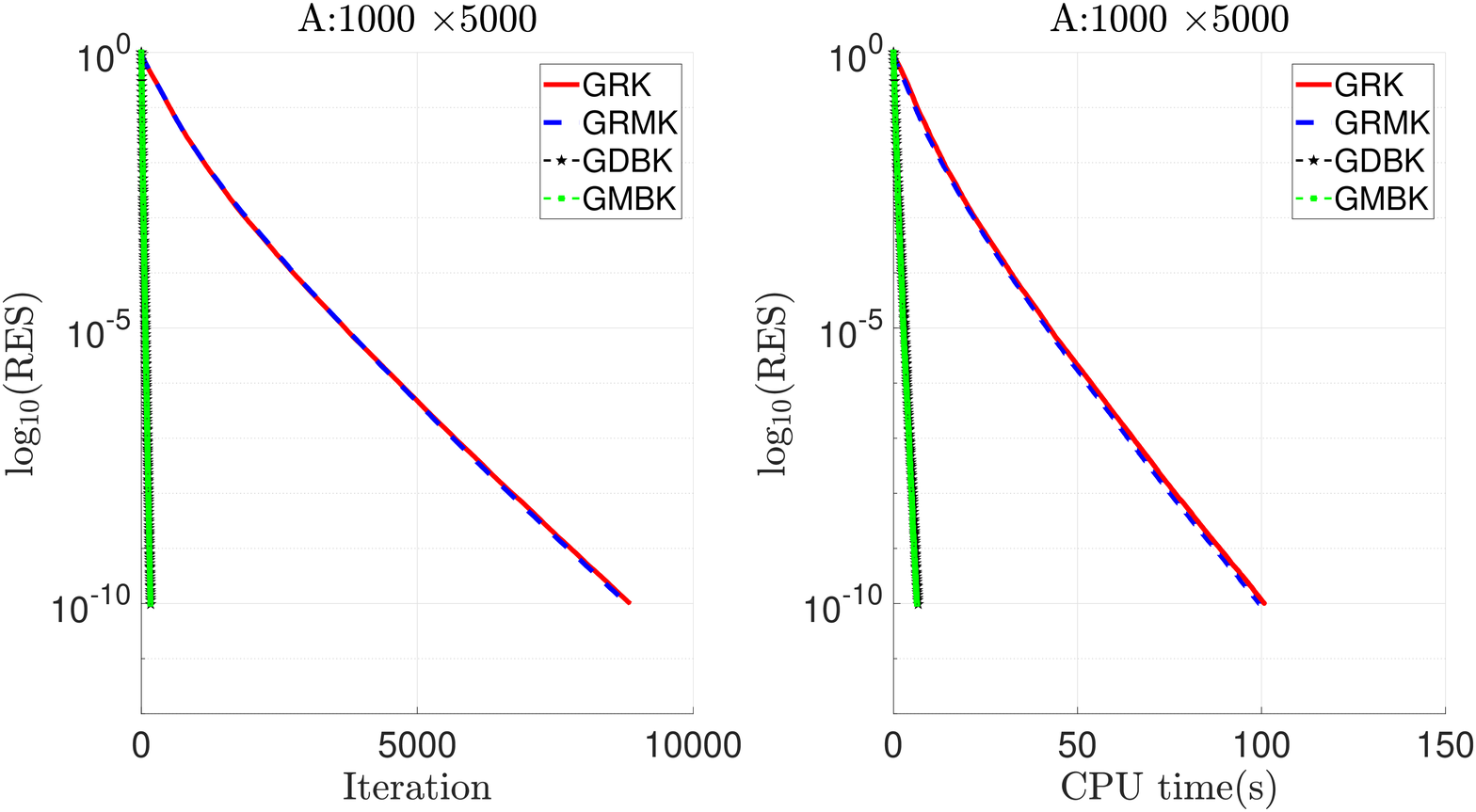}
 \end{center}
\caption{Performance of the methods on $A \in R^{1000 \times 5000}$
with i.i.d. $N(0, 1)$ entries.}\label{fig2}
\end{figure}
\begin{figure}[ht]
 \begin{center}
\includegraphics [height=4.5cm,width=10cm]{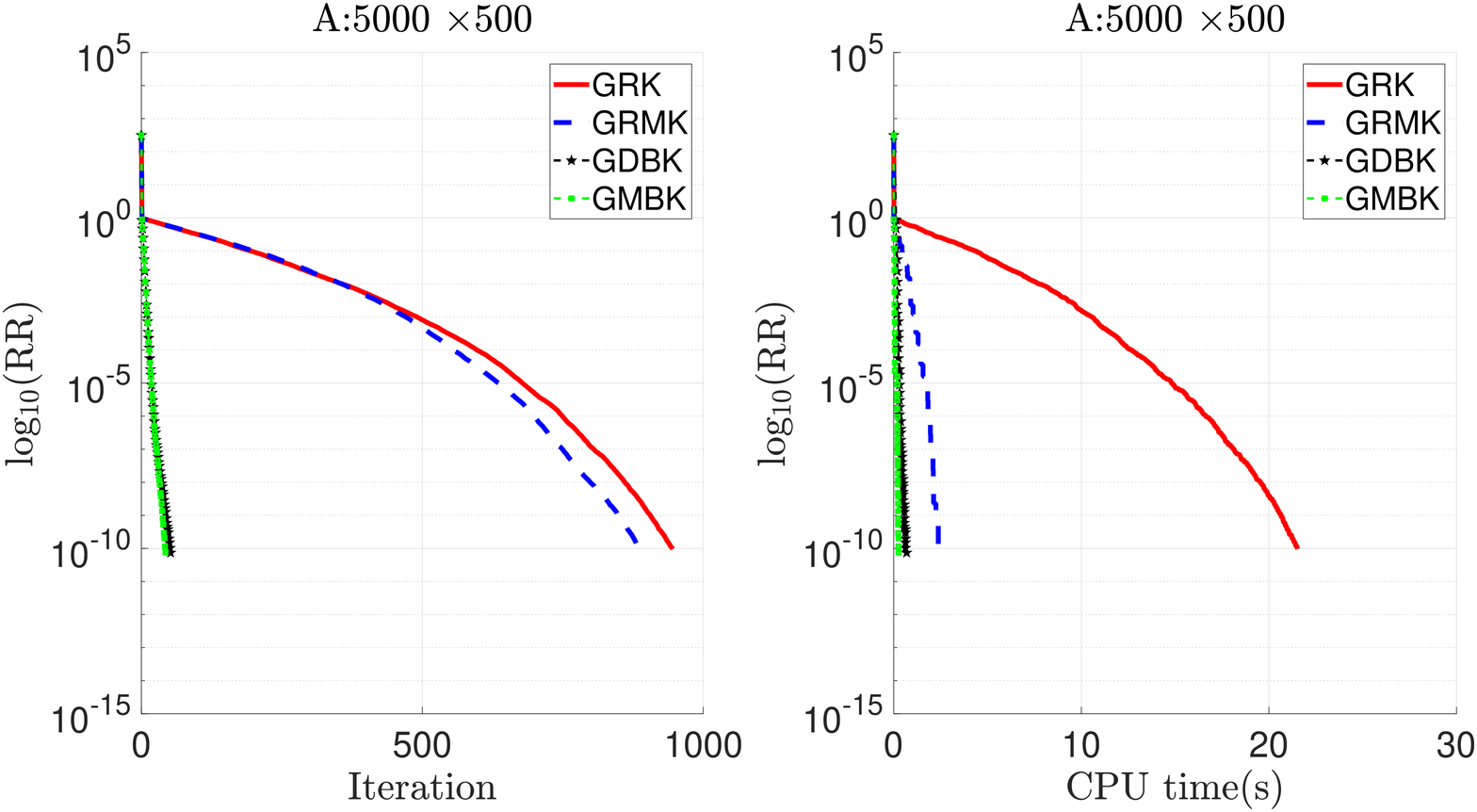}
 \end{center}
\caption{Performance of the methods on $A$ 
generated by the MATLAB function \texttt{sprandn(5000,500,0.2,0.8)}.}\label{fig3}
\end{figure}
\begin{figure}[ht]
 \begin{center}
\includegraphics [height=4.5cm,width=10cm ]{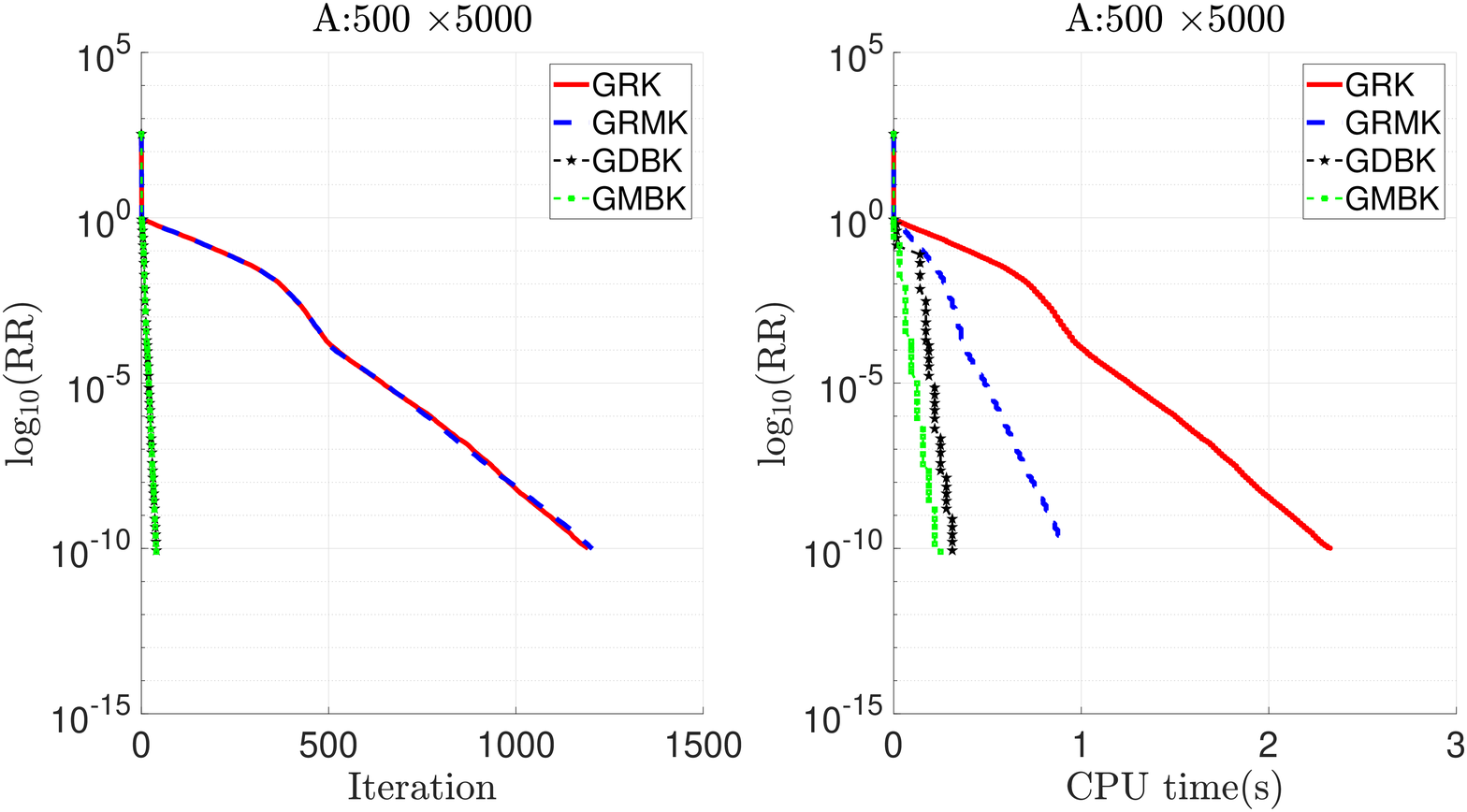}
 \end{center}
\caption{Performance of the methods on $A$ 
generated by the MATLAB function \texttt{sprandn(500,5000,0.2,0.8)}.}\label{fig4}
\end{figure}
\begin{figure}[ht]
 \begin{center}
\includegraphics [height=4.5cm,width=10cm]{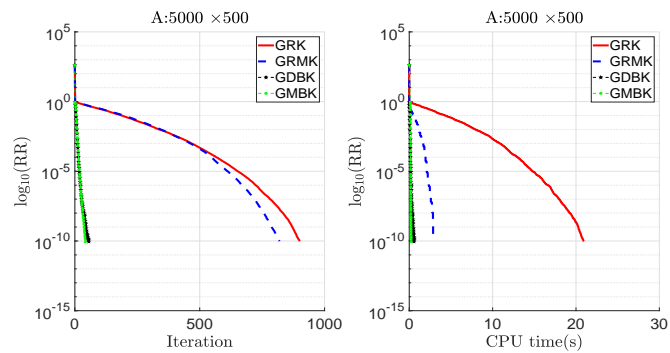}
 \end{center}
\caption{Performance of the methods on $A$ 
generated by the MATLAB function \texttt{sprand(5000,500,0.2,0.8)}.}\label{fig5}
\end{figure}
\begin{figure}[ht]
 \begin{center}
\includegraphics [height=4.5cm,width=10cm ]{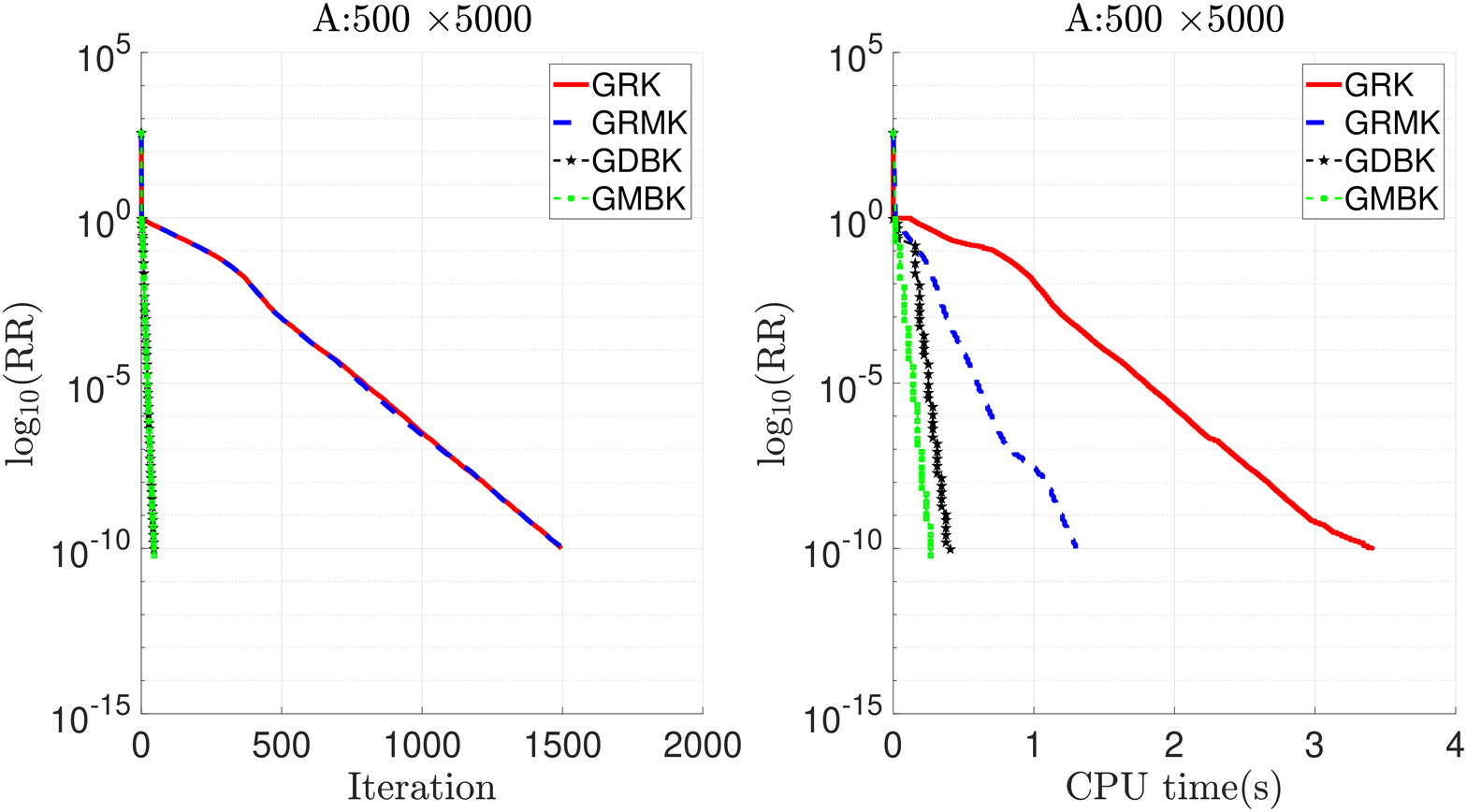}
 \end{center}
\caption{Performance of the methods on $A$ 
generated by the MATLAB function \texttt{sprand(500,5000,0.2,0.8)}.}\label{fig6}
\end{figure}

We also compare the performance of the methods 
on a real-world matrix, \textbf{mk9-b3}, taken from \cite{Davis2011}. Numerical results are reported in Figures \ref{fig7}--\ref{fig8}, 
which show the similar results obtained from the above experiments on dense matrices. That is,  the GRMK method and its block version have almost the same performance as the GRK method and its block version. 

\begin{figure}[ht]
 \begin{center}
\includegraphics [height=4.5cm,width=10cm]{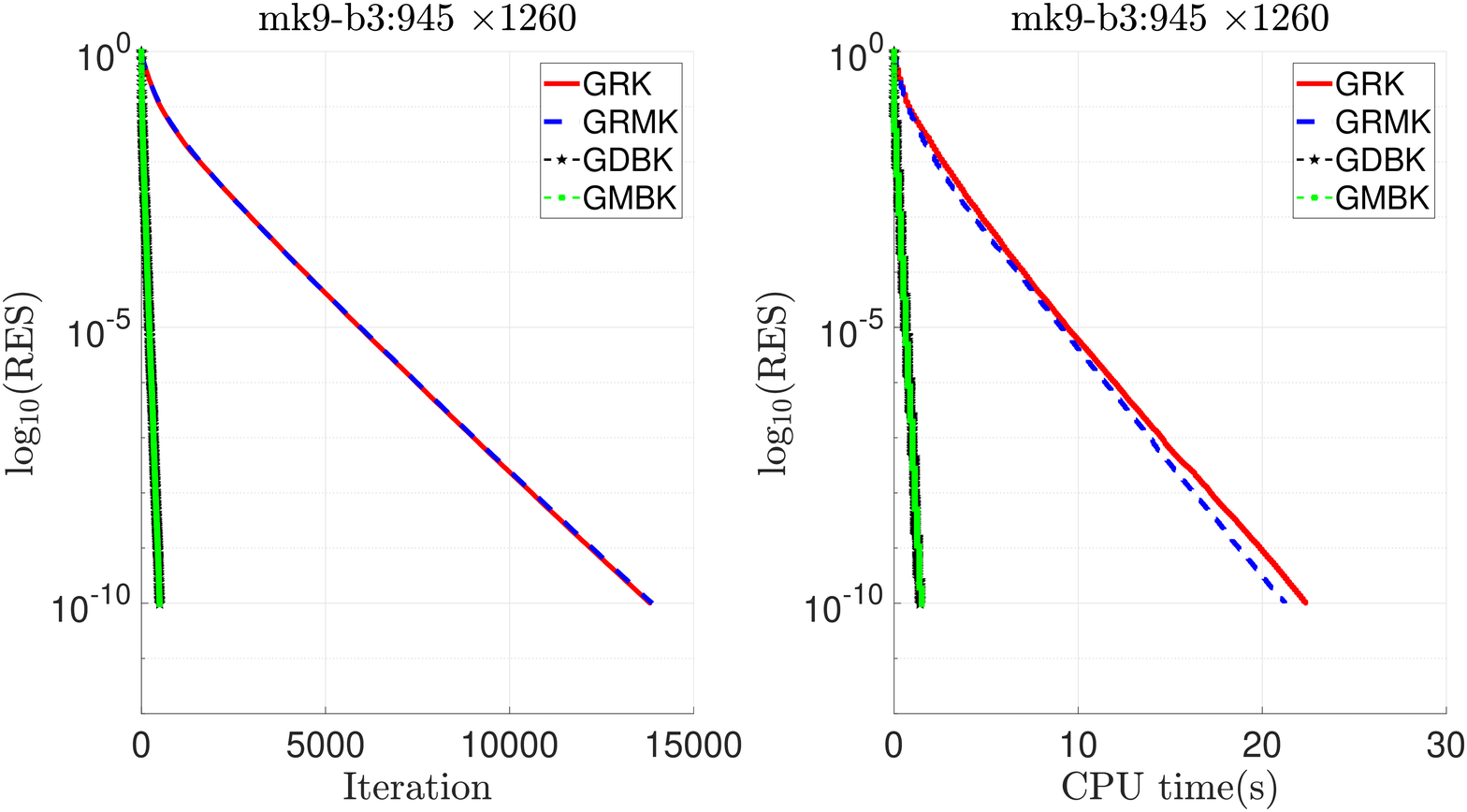}
 \end{center}
\caption{Performance of the methods on a real-world
matrix, \textbf{mk9-b3} ($945 \times 1260$).}\label{fig7}
\end{figure}
\begin{figure}[ht]
 \begin{center}
\includegraphics [height=4.5cm,width=10cm ]{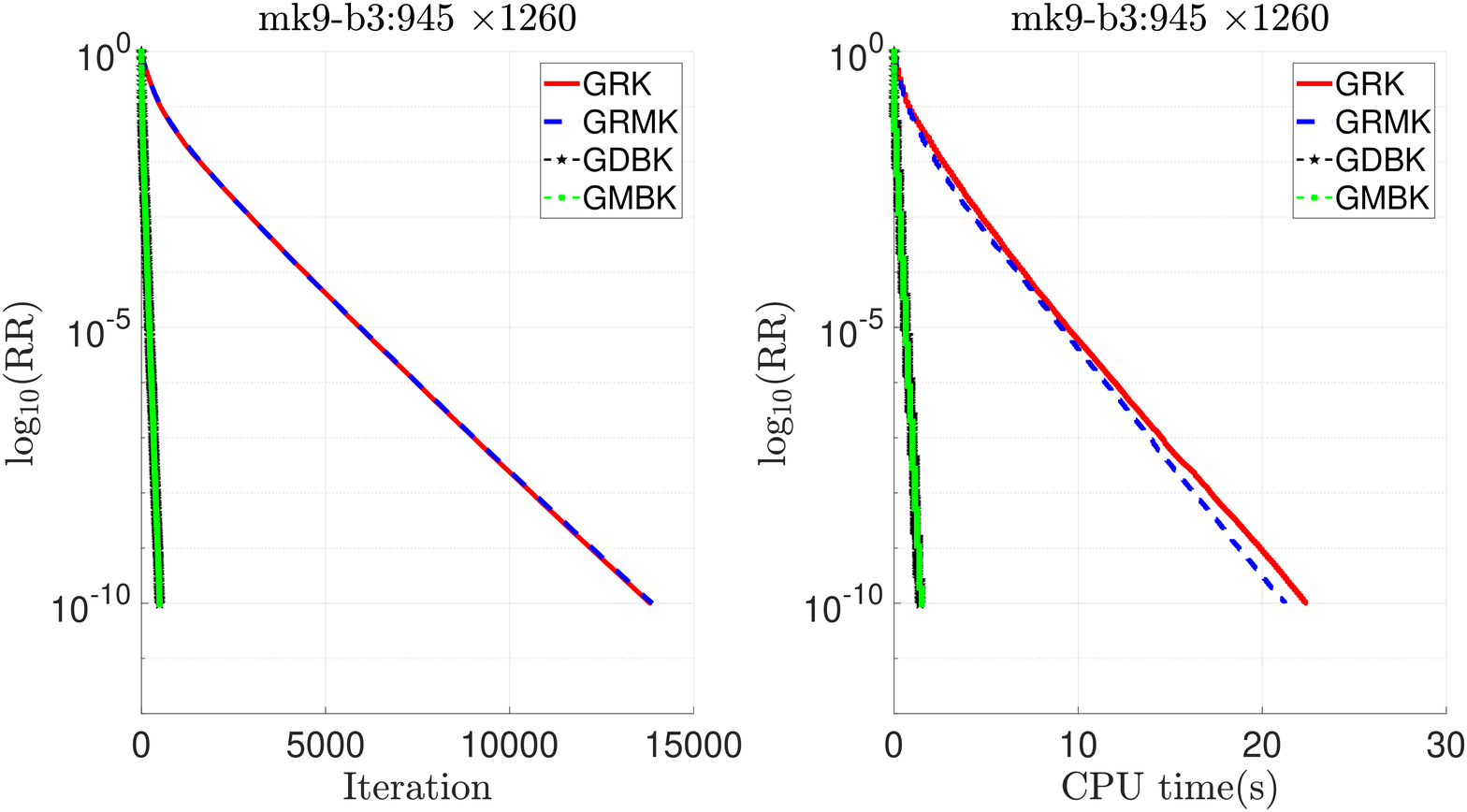}
 \end{center}
\caption{Performance of the methods on a real-world
matrix, \textbf{mk9-b3} ($945 \times 1260$).}\label{fig8}
\end{figure}

 \clearpage

\bibliography{mybibfile}

\end{document}